\theoremstyle{plain}
\newtheorem{thm}{Theorem}[section]
\newtheorem{prop}[thm]{Proposition}
\newtheorem{cor}[thm]{Corollary}
\theoremstyle{definition}
\newtheorem{defn}[thm]{Definition}
\newtheorem{exmp}[thm]{Example}
\newcommand{\la}{\lambda}
\newcommand{\tabincell}[2]{\begin{tabular}{@{}#1@{}}#2\end{tabular}}  
\numberwithin{equation}{section} \errorcontextlines=0
\begin{document}
\title{Spin Kostka polynomials and vertex operators}
\author{Naihuan Jing}
\address{Department of Mathematics, North Carolina State University, Raleigh, NC 27695, USA}
\email{jing@ncsu.edu}
\author{Ning Liu}
\address{School of Mathematics, South China University of Technology,
Guangzhou, Guangdong 510640, China}
\email{mathliu123@outlook.com}
\subjclass[2010]{Primary: 05E05; Secondary: 17B69, 05E10}\keywords{spin Kostka polynomials, Jing operators, Hall-Littlewood polynomials}

\maketitle

\begin{abstract}
An algebraic iterative formula for the spin Kostka-Foulkes polynomial $K^-_{\xi\mu}(t)$ is given using vertex operator realizations of Hall-Littlewood symmetric functions and Schur's Q-functions. Based on the operational formula, more favorable properties are obtained parallel to the Kostka polynomial. In particular, we obtain some formulae for the number of (unshifted) marked tableaux. As an application,
we confirmed a conjecture of Aokage on the expansion of the Schur $P$-function in terms of Schur functions.
Tables of $K^-_{\xi\mu}(t)$ for $|\xi|\leq6$ are listed.
\end{abstract}

\section{Introduction}
The Hall-Littlewood symmetric functions $P_{\mu}(x;t)$ and the Kostka-Foulkes polynomials $K_{\lambda\mu}(t)$ both have played an active role in algebraic combinatorics and representation theory. On one hand, the Hall-Littlewood symmetric functions $P_{\mu}(x;t)$ are certain deformation of the Schur functions $s_{\lambda}(x)$ and the
Kostka-Foulkes polynomials $K_{\lambda\mu}(t)$ are the transition coefficients between the two bases. On the other hand, $K_{\lambda\mu}(t)$ have the following representation theoretic interpretation. Let
$\mathfrak{B}_{\mu}$ be the variety of flags preserved by a nilpotent matrix with Jordan block of shape $\mu$. The cohomology group $H^\bullet(\mathfrak{B}_{\mu})$ affords a graded $\mathfrak{S}_n$-module structure. Set
\begin{align*}
C_{\lambda\mu}(t)=\sum\limits_{i\geq 0}t^i (\text {dim Hom}_{\mathfrak{S}_n}(S^{\lambda}, H^{2i}(\mathfrak{B}_{\mu})),
\end{align*}
where $S^{\lambda}$ denotes the Specht module of $\mathfrak{S}_n$ associated with $\lambda$. Garsia and Procesi \cite{GP} proved that
\begin{align}
K_{\lambda\mu}(t)=C_{\lambda\mu}(t^{-1})t^{n(\mu)},
\end{align}
which confirms geometrically the positivity of the Kostka-Foulkes polynomials \cite{LS}.

Recently, Wan and Wang \cite{W} have introduced the spin
Kostka-Foulkes polynomials $K^-_{\xi\mu}(t)$
as the transition coefficients between the Hall-Littlewood functions $P_{\mu}(x;t)$ and Schur's $Q$-functions $Q_{\xi}$ with interesting representation theoretic interpretations. As is well-known,
the Schur Q-functions are indexed by strict partitions and were used by Schur \cite{S} in generalizing the Frobenius character formula for projective irreducible characters of the symmetric group $\mathfrak S_n$. Schur's Q-functions form
a distinguished basis in the subring of symmetric functions generated by $p_1, p_3, \cdots$. Yamaguchi \cite{Y} has shown that the category of irreducible $\mathfrak S_n$-supermodules is equivalent to that of supermodules of the Hecke-Clifford algebra $\mathcal H_n=\mathcal C_n\rtimes \mathbb C\mathfrak S_n$ and the irreducible objects $D^{\xi}$ are parametrized by strict partitions $\xi\in\mathcal{SP}_n$. Wan and Wang have shown that the spin Kostka polynomials admit the following interpretation \cite{W}:
\begin{align}
K^-_{\lambda\mu}(t)=2^{[l(\xi)/2]}C_{\xi\mu}(t^{-1})t^{n(\mu)},
\end{align}
and
\begin{align*}
C^-_{\xi\mu}(t)=\sum\limits_{i\geq 0}t^i (\text {dim Hom}_{\mathcal{H}_n}(D^{\xi}, \mathcal{C}_n\otimes H^{2i}(\mathfrak{B}_{\mu})).
\end{align*}

Let $\mathfrak q(n)$ be the queer Lie superalgebra containing the general linear Lie algebra $\mathfrak{gl}(n)$ as its even subalgebra. Sergeev \cite{Se} has shown that the irreducible 
$\mathfrak q(n)$-modules $V(\xi)$ are also parametrized by strict partitions $\xi\in\mathcal{SP}_n$.
It turns out that the
$q$-weight multiplicity $\gamma^-_{\xi\mu}(t)$ associated with the weight space $V(\xi)_{\mu}$ also appears
as the spin Kostka polynomial \cite{W}:
\begin{align}
K^-_{\lambda\mu}(t)=2^{[l(\xi)/2]}\gamma^-_{\xi\mu}(t).
\end{align}

The purpose of this paper is to give an operational algebraic formula for the spin Kostka-Foulkes polynomials $K^-_{\xi\mu}(t)$. The method we adopt is similar to that of \cite{BJ}, in which the vertex operator realizations of the Hall-Littlewood polynomials and Schur functions were employed. However, there is some subtlety in the spin situation.

In the usual vertex realization of Schur's Q-functions \cite{Jing91}, only the modes of odd indices (of the twisted Heisenberg algebra) were used in the definition. Should this vertex operator be employed, the commutation relations of its components with those of the vertex operator for the
Hall-Littlewood symmetric functions would have infinitely many terms in the quadratic relations.
To salvage the situation, we introduce a new vertex operator realization of the Schur Q-functions using a larger Heisenberg algebra graded by all integers
(see \eqref{e:schurQop}-\eqref{e:schurQop1}). The new
vertex operator realization enables us to get a finite quadratic relation between the operators realizing both the Hall-Littlewood and Schur Q-functions
and then the matrix coefficients express the spin Kostka polynomials.

As matrix coefficients, the spin Kostka-Foulkes polynomials can be computed
in general and exact formulas are obtained in some special cases. We also prove
a stability formula for the spin Kostka polynomials. We have clarified some questions regarding them (disproved the symmetric property, cf. Ex. \ref{e:counter}) and obtained counting formulas for the Stembridge coefficients \cite{S} between the
Schur P functions and Schur functions. As applications, we have confirmed a recent conjecture of Aokage and are
able to derive a tensor decomposition in the general situation.

The paper is organized as follows. In Section \ref{S:HLop} we recall the vertex operator realization of the Hall-Littlewood functions and give a new vertex operator construction of the Schur Q-functions, which is specifically tailored  for taming the commutation relation between the two vertex operators. In Section \ref{S:spinHL} we express
the spin Hall-Littlewood polynomials as matrix coefficients of vertex operators and derive
an iterative formula (see Theorem \ref{t:Recurrence Formula}).
Finally in Section \ref{S:MT} we use the iterative formulas to verify Aokage's conjecture on multiplicities of tensor products of spin modules, and a formula is also obtained for the general case.

\section{Vertex operator realization of Hall-Littlewood and Schur Q-functions}\label{S:HLop}

A partition (resp. strict partition) $\lambda=(\lambda_1,\lambda_2,\ldots)$, denoted $\lambda\vdash n$, is a weakly (resp. strictly) decreasing sequence of positive integers such that $\sum_i\la_i=n$. The sum
$|\lambda|=\sum_i\la_i$ is called the weight and the number $l(\lambda)$ of nonzero parts is called the length.
We also denote $\lambda\models n$ if $\lambda$ is a composition of $n$ when the pasts $\la_i$ are not necessarily ordered.
 The set of partitions (resp. strict partitions) of weight $n$ will be denoted by $\mathcal P_n$ (resp. $\mathcal{SP}_n$). The dominance order
 $\lambda\geq\mu$ is
 defined by  $|\lambda|=|\mu|$ and $\lambda_1+\cdots+\lambda_i\geq\mu_1+\cdots+\mu_i$ for each $i$.

Let $m_i$ be the multiplicity of $i$ in $\lambda$ and set $z_{\lambda}=\prod_{i\geq 1}i^{m_i(\lambda)}m_i(\lambda)!$, we define the parity $\varepsilon_{\lambda}=(-1)^{|\lambda|-l(\lambda)}$ and
\begin{align}\label{e:zt}
z_{\lambda}(t)&=\frac{z_{\lambda}}{\prod_{i\geq 1}(1-t^{\lambda_i})},\\
n(\lambda)&=\sum\limits_{i\geq1}(i-1)\lambda_i.
\end{align}

A partition $\la$ can be visualized by its Young diagram when $\la$ is identified with $\{(i,j)\in\mathbb{Z}^2\mid 1\leq i\leq l(\la), 1\leq j\leq \la_i\}$. To each cell $(i,j)\in \la$, we define its content $c_{ij}=j-i$ and hook length $h_{ij}=\la_i+\la^{'}_{j}-i-j+1$, where the partition $\la'=(\la_1', \ldots, \la_{\la_1}')$ is the dual partition of $\lambda$ obtained by reflecting the Young diagram of $\la$ along the diagonal.

In this paper, we use the t-integer $[n]=t^{n-1}+t^{n-2}+\cdots+t+1$. Similarly $[n]! = [n]\cdots[1]$, and
the Gauss $t$-binomial symbol $
\left[\begin{matrix}n\\k\end{matrix}\right]=\frac{[n]!}{[k]![n-k]!}
$.

Let $\Lambda_F$ be the ring of symmetric functions over $F=\mathbb Q(t)$, the field of rational functions in $t$. We also consider $\Lambda$ over the ring of integers. The space $\Lambda_F$ is graded and decomposes into a direct sum
\begin{align}\label{e:grade}
\Lambda_F=\bigoplus_{n=0}^{\infty} \Lambda_F^n.
\end{align}
where $\Lambda_F^n$ is the subspace of degree $n$, spanned the element $p_{\lambda}=p_{\lambda_1}p_{\lambda_2}\cdots p_{\lambda_l}$ with $|\lambda|=n$. Here
$p_r$ is the degree $r$ power sum symmetric function.

Let $\Gamma_{\mathbb{Q}}$ be the subring of $\Lambda_{\mathbb{Q}}$ generated by the $p_{2r-1}$, $r\in\mathbb N$.
\begin{align*}
\Gamma_{\mathbb{Q}}=\mathbb{Q}[p_r: r \quad\text{odd}].
\end{align*}
The Schur $Q$-functions $Q_{\xi},$ $\xi$ strict, form a $\mathbb{Q}$-basis of $\Gamma_{\mathbb Q}$ \cite{M}. $\Gamma$ is also a graded ring $\Gamma=\oplus_{n\geq0}\Gamma^{n},$ where $\Gamma^{n}=\Gamma\cap \Lambda^{n}.$

The space $\Lambda_F$ is equipped with the bilinear form $\langle\ , \ \rangle$ defined by
\begin{align}\label{e:form}
\langle p_{\lambda}, p_{\mu}\rangle=\delta_{\lambda\mu}z_{\lambda}(t).
\end{align}
As $\{z_{\lambda}(t)^{-1}p_{\lambda}\}$ is the dual basis of the power sum basis, the adjoint operator of the multiplication operator $p_n$
is the differential operator $p_n^* =\frac{n}{(1-t^n)}\frac{\partial}{\partial p_n}$ of degree $-n$.

We recall the vertex operator realization of the Hall-Littlewood symmetric functions \cite{Jing1}
and  construct a variant vertex operator for the Schur Q-function on the space $\Lambda_F$.
The \textit{vertex operators} $H(z)$ and its adjoint $H^*(z)$ are $t$-parametrized linear maps: $\Lambda_F\longrightarrow \Lambda_F[[z, z^{-1}]]=\Lambda_F\otimes F[z, z^{-1}]$ defined by
\begin{align}
\label{e:hallop}
H(z)&=\mbox{exp} \left( \sum\limits_{n\geq 1} \dfrac{1-t^{n}}{n}p_nz^{n} \right) \mbox{exp} \left( -\sum \limits_{n\geq 1} \frac{\partial}{\partial p_n}z^{-n} \right)\\ \notag
&=\sum_{n\in\mathbb Z}H_nz^{n},\\
H^*(z)&=\mbox{exp} \left(-\sum\limits_{n\geq 1} \dfrac{1-t^{n}}{n}p_nz^{n} \right) \mbox{exp} \left(\sum \limits_{n\geq 1} \frac{\partial}{\partial p_n}z^{-n} \right)\\ \notag
&=\sum_{n\in\mathbb Z}H^*_nz^{-n}.
\end{align}

 Note that * is $\mathbb Q(t)$-linear and  anti-involutive satisfying
\begin{equation}
\langle H_nu, v\rangle=\langle u, H_n^*v\rangle
\end{equation}
for $u, v\in \Lambda_F$.

We now introduce the \textit{vertex operators $Q(z)$} and {\it its adjoint $Q^*(z)$} as the linear maps: $\Lambda_F\longrightarrow \Lambda_F[[z, z^{-1}]]$ defined by
\begin{align}
\label{e:schurQop}
Q(z)&=\mbox{exp} \left( \sum\limits_{n\geq 1, \text{odd}} \dfrac{2}{n}p_nz^{n} \right) \mbox{exp} \left( -\sum \limits_{n\geq 1} \frac{\partial}{\partial p_n}z^{-n} \right)\\ \notag
&=\sum_{n\in\mathbb Z}Q_nz^{n},\\ \label{e:schurQop1}
Q^*(z)&=\mbox{exp} \left(-\sum\limits_{n\geq 1} \dfrac{1-t^{n}}{n}p_nz^{n} \right) \mbox{exp} \left(\sum \limits_{n\geq 1, \text{odd}} \frac{2}{1-t^n}\frac{\partial}{\partial p_n}z^{-n} \right)\\ \notag
&=\sum_{n\in\mathbb Z}Q^*_nz^{-n}.
\end{align}
The components $H_n, H_{-n}^*\in End_F(\Lambda)$ are of degree $n$, so are
annihilation operators for $n>0$. Similarly $Q_n, Q^*_{-n}\in End_{\mathbb Q}(\Lambda)$. We remark that the
second exponential factor of $Q(z)$ is different from the usual construction in \cite{Jing91}, and this will be crucial for
our later discussion. In particular, note that $Q(-z)\neq Q^*(z)$ in the current situation due to different inner product.

We collect the relations of the vertex operators as follows.
\begin{prop}\label{p:com}
\cite{Jing1, Jing91} 1) The operators $H_n$ and $H_n^*$ satisfy the following relations
\begin{align}\label{e:com1}
H_{m}H_n-tH_nH_m&=tH_{m+1}H_{n-1}-H_{n-1}H_{m+1}\\ \label{e:com2}
H^*_{m}H^*_n-tH^*_nH^*_m&=tH^*_{m-1}H^*_{n+1}-H^*_{n+1}H^*_{m-1}\\ \label{e:com3}
H_{m}H^*_n-tH^*_nH_m&=tH_{m-1}H^*_{n-1}-H^*_{n-1}H_{m-1}+(1-t)^2\delta_{m, n}\\ \label{e:com4}
H_{-n}. 1&=Q_{-n}.1=\delta_{n, 0}, \qquad H_{n}^{*}. 1=Q_{n}^{*}. 1=\delta_{n, 0}
\end{align}
where $\delta_{m, n}$ is the Kronecker delta function.

2) The operators $Q_n$ satisfy the Clifford algebra relations:
\begin{align}\label{e:com5}
\{Q_m, Q_n\}=(-1)^n2\delta_{m,-n}
\end{align}
where $\{A, B\}=AB+BA$.
\end{prop}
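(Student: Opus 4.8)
The plan is to prove Proposition~\ref{p:com} in two parts, establishing the commutation relations \eqref{e:com1}--\eqref{e:com4} for the Hall-Littlewood operators $H_n, H_n^*$, and then the Clifford relations \eqref{e:com5} for the $Q_n$. The standard technique for all of these is to compute the appropriate product of generating-series operators as a single normally-ordered exponential times a scalar $c$-number factor, and then to extract the relation among individual modes by comparing coefficients. First I would compute the operator product $H(z)H(w)$ by using the Baker--Campbell--Hausdorff identity: since the creation part $\exp(\sum_n \frac{1-t^n}{n}p_n z^n)$ and the annihilation part $\exp(-\sum_n \frac{\partial}{\partial p_n} z^{-n})$ only fail to commute when an annihilation mode from the left factor meets a creation mode from the right factor, one gets
\begin{align*}
H(z)H(w)= \frac{z-w}{z-tw}\,:\!H(z)H(w)\!:,
\end{align*}
where the scalar prefactor comes from $\exp\bigl(\sum_{n\geq1}\frac{1-t^n}{n}(w/z)^n\bigr)=\frac{z-tw}{z-w}$ applied with a sign. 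The normally ordered product $:\!H(z)H(w)\!:$ is manifestly symmetric in $z,w$ up to the interchange dictated by the Heisenberg relation.

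The next step is to clear denominators: multiplying the functional equation $(z-tw)H(z)H(w)=(z-w)\,:\!H(z)H(w)\!:$ and its $z\leftrightarrow w$ counterpart, and using the symmetry of the normal-ordered product, yields the rational-function identity
\begin{align*}
(z-tw)H(z)H(w)-(w-tz)H(w)H(z)=0
\end{align*}
after expanding $(z-w)$ factors. Extracting the coefficient of $z^{m}w^{n}$ on both sides then produces exactly \eqref{e:com1}; here the four terms $H_m H_n$, $H_n H_m$, $H_{m+1}H_{n-1}$, $H_{n-1}H_{m+1}$ arise from the two monomials in each of $(z-tw)$ and $(w-tz)$. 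Relation \eqref{e:com2} follows by the identical computation applied to $H^*(z)H^*(w)$ (the sign flip in the exponents of $H^*$ accounts for the shift $m-1, n+1$ rather than $m+1, n-1$), and \eqref{e:com3} follows from the mixed product $H(z)H^*(w)$, where the crucial difference is that the creation modes of $H$ and the creation modes of $H^*$ carry opposite signs, so the contraction contributes the factor $\frac{(z-w)^2}{(z-tw)(w-tz)}$ rather than a single pole; clearing denominators and comparing coefficients of $z^m w^n$ gives the three bulk terms plus the $(1-t)^2\delta_{m,n}$ term, the latter coming from the double pole's finite residue. The relations \eqref{e:com4} are immediate: applying any operator's annihilation (positive-mode) part to the vacuum $1$ kills all $p_n$-derivatives, so only the constant term survives.

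For part~2), I would compute $Q(z)Q(w)$ by the same normal-ordering procedure, but now the creation part involves only the \emph{odd} power sums with coefficient $\frac{2}{n}$, so the contraction between the annihilation modes of $Q(z)$ and the creation modes of $Q(w)$ produces $\exp\bigl(\sum_{n\geq1,\,\mathrm{odd}}\frac{2}{n}(w/z)^n\bigr)=\frac{z+w}{z-w}$. This gives
\begin{align*}
Q(z)Q(w)=\frac{z-w}{z+w}\,:\!Q(z)Q(w)\!:,
\end{align*}
and since the normal-ordered product is symmetric while $\frac{z-w}{z+w}$ is antisymmetric under $z\leftrightarrow w$, we obtain $(z+w)Q(z)Q(w)=-(z+w)Q(w)Q(z)$, i.e. $(z+w)\{Q(z),Q(w)\}=0$ as formal series. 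Extracting the coefficient of $z^{m+1}w^{n}$ (equivalently matching the pole at $z=-w$) isolates the anticommutator and yields $\{Q_m,Q_n\}=(-1)^n 2\delta_{m,-n}$, the factor $2$ tracing back to the coefficient $\frac{2}{n}$ in the definition and the sign $(-1)^n$ to the expansion of $\frac{1}{z+w}$. \textbf{The main obstacle} I anticipate is bookkeeping rather than conceptual: one must track carefully the region of expansion of each rational prefactor (e.g. $|z|>|w|$ versus $|w|>|z|$) so that the two orderings $Q(z)Q(w)$ and $Q(w)Q(z)$ correspond to expansions of the \emph{same} rational function in complementary domains, and the residue at $z=-w$ is what produces the delta function. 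Getting the signs and the exact shift pattern in \eqref{e:com1}--\eqref{e:com3} correct also demands care, since a single misplaced sign in the $t$-weighted contraction propagates into the wrong index shifts.
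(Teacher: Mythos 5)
Your overall method---normal-order each product of generating series, extract the scalar contraction factor, and compare coefficients---is exactly the paper's approach: the paper cites \cite{Jing1} for part 1) and proves part 2) by computing $Q(z)Q(w)=\frac{z-w}{z+w}:Q(z)Q(w):$ and then invoking Prop.~4.15 of \cite{Jing91} for the extraction of \eqref{e:com5}. However, your execution contains errors, one of which is a genuine gap. In part 1) your functional equation has the wrong sign: since $:H(z)H(w):$ is symmetric and $(z-tw)H(z)H(w)=(z-w):H(z)H(w):$, the $z\leftrightarrow w$ swap gives $(w-tz)H(w)H(z)=-(z-w):H(z)H(w):$, so the correct identity is $(z-tw)H(z)H(w)+(w-tz)H(w)H(z)=0$; the minus-sign version you wrote would yield $H_mH_n+tH_nH_m=tH_{m+1}H_{n-1}+H_{n-1}H_{m+1}$, which is not \eqref{e:com1}. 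Also, your contraction for the mixed product is incorrect: the correct computations are $H(z)H^*(w)=\frac{z-tw}{z-w}:H(z)H^*(w):$ and $H^*(w)H(z)=\frac{w-tz}{w-z}:H(z)H^*(w):$ (a simple pole at $z=w$ in each order), not the factor $\frac{(z-w)^2}{(z-tw)(w-tz)}$. Accordingly, the term $(1-t)^2\delta_{m,n}$ in \eqref{e:com3} is not a ``finite residue of a double pole'': it arises because the two orderings expand the \emph{same} simple pole $\frac{1}{z-w}$ in complementary regions, and the difference of the two expansions is the formal delta function $z^{-1}\delta(w/z)$, where $\delta(w/z)=\sum_{k\in\mathbb{Z}}w^kz^{-k}$. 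Concretely,
\begin{align*}
(w-tz)H(z)H^*(w)+(z-tw)H^*(w)H(z)
&=(z-tw)(w-tz)\,z^{-1}\delta(w/z)\,:H(z)H^*(w):\\
&=(1-t)^2\,w\,\delta(w/z),
\end{align*}
using $:H(z)H^*(w):\,=1$ at $z=w$; extracting the coefficient of $z^mw^{-n}$ then gives \eqref{e:com3}.

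The genuine gap is in part 2). From $(z+w)\{Q(z),Q(w)\}=0$ you cannot conclude \eqref{e:com5}: coefficient extraction from that equation yields only the recursion $\{Q_m,Q_n\}+\{Q_{m+1},Q_{n-1}\}=0$, i.e.\ it says $\{Q(z),Q(w)\}$ is \emph{some} multiple of the formal delta function $\delta(-w/z)$, with coefficient left undetermined. The substantive step is to compute that coefficient: the two-region expansion gives $\{Q(z),Q(w)\}=(z-w)z^{-1}\delta(-w/z):Q(z)Q(w):$, and one must then set $z=-w$, where $(z-w)z^{-1}\mapsto 2$, and prove that $:Q(-w)Q(w):$ acts as the identity. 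In the paper's modified construction this is precisely where care is required: the annihilation part of the new $Q(z)$ involves all $\frac{\partial}{\partial p_n}$, not only odd $n$, so $:Q(-w)Q(w):\,=\exp\bigl(-2\sum_{n\ \mathrm{even}}\frac{\partial}{\partial p_n}w^{-n}\bigr)$, which is the identity only on the subring $\Gamma$ generated by the odd power sums. In fact, the statement your formal argument would establish on all of $\Lambda_F$ is false there: a direct computation gives $\{Q_0,Q_{-2}\}p_2=-4\neq0$, consistent with $\{Q_m,Q_n\}=-4(-1)^m\frac{\partial}{\partial p_2}$ whenever $m+n=-2$. So any correct proof must carry out the delta-function normalization (equivalently, work on $\Gamma$, where the $Q_\xi$ live); this is exactly the content of the paper's appeal to Prop.~4.15 of \cite{Jing91}, and it is the step your argument skips.
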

\begin{proof} Commutation relations \eqref{e:com1}-\eqref{e:com4} were from \cite{Jing1}. We focus on 2).
Define the normal ordering product
\begin{align*}
:Q(z)Q(w):=\mbox{exp} \left( \sum\limits_{n\geq 1, \text{odd}} \dfrac{2}{n}p_n(z^{n}+w^{n}) \right)\mbox{exp} \left( -\sum \limits_{n\geq 1} \frac{\partial}{\partial p_n}(z^{-n}+w^{-n}) \right).
\end{align*}
Then we have for $|z|<|w|$
\begin{align*}
Q(z)Q(w)&=:Q(z)Q(w):\mbox{exp} \left( -\sum\limits_{n\geq 1, \text{odd}} \dfrac{2}{n}(\frac{w}{z})^n \right)\\
&=:Q(z)Q(w):\frac{z-w}{z+w}.
\end{align*}
The rest of the argument is similar to Proposition 4.15 in \cite{Jing91}.
\end{proof}
Note that
the vacuum vector $1$ is annihilated by $p_n^*$, so
\begin{equation}\label{e:qfcn1}
H(z).1=\exp(\sum_{n=1}^{\infty}\frac{1-t^n}np_nz^n)=\sum_{n=0}^{\infty}q_nz^n=q(z)
\end{equation}
where $q_n$ is the Hall-Littlewood polynomial of one-row partition $(n)$, and clearly
\begin{equation}\label{e:qfcn2}
q_n=H_n.1=\sum_{\lambda\vdash n}\frac1{z_{\lambda}(t)}p_{\lambda}.
\end{equation}

We also introduce a spin analogue $h(z)$ as follows,
\begin{align}
\tilde{h}(z)=\exp(\sum_{n=1}^{\infty}\frac{t^n-(-1)^n}{n}p_nz^n)=\sum\limits_{n\geq0}\tilde h_nz^n,
\end{align}
then
\begin{align}
\tilde h_n=\sum\limits_{\lambda\vdash n}\frac{\varepsilon_{\lambda}}{z_{\lambda}(-t)}p_{\lambda}.
\end{align}
Moreover,
\begin{align}
\tilde h_{n}(-t)=\sum\limits_{\lambda\vdash n}\varepsilon_{\lambda}u_{\lambda}q_{\lambda},
\end{align}
where $\varepsilon_{\lambda}=(-1)^{|\lambda|-l(\lambda)}$ and
$u_{\lambda}=\frac{l(\lambda)!}{\prod_{i\geq1}m_{i}(\lambda)!}.$

As consequences of the proposition, one also has that 
\begin{align} \label{e:com5}
H_{n}H_{n+1}&=tH_{n+1}H_{n},\\ \label{e:com6}
H_{n}^{*}H_{n-1}^{*}&=tH_{n-1}^{*}H_{n}^{*},\\ \label{e:com7a}
\langle H_n.1, H_n.1\rangle&=\sum_{\lambda\vdash n}\frac{1}{z_{\la}(t)}=1-t, \qquad n>0\\ \label{e:com7b}
\langle H_n.1, H^*_{-n}.1\rangle&=\sum_{\lambda\vdash n}\frac{(-1)^{l(\lambda)}}{z_{\la}(t)}=t^n-t^{n-1}, \qquad n>0
\end{align}
where the last two identities follow from \eqref{e:com3} and \eqref{e:com1} by induction.

In general, expressing $H_{\mu}$ for any composition $\mu$ in terms of the basis elements $H_{\lambda}$, $\lambda\in\mathcal P$
can be formulated as follows.
Let $S_{i,a}$ be the transformation $(\la_1, \cdots, \la_i, \la_{i+1}, \cdots)\mapsto (\la_1, \cdots, \la_{i+1}-a, \la_{i}+a, \cdots)$, where
$\la_{i+1}>\la_i$.
Define
\begin{equation}\label{e:straight}
C(S_{i,a})=\begin{cases} t & a=0\\ t^{a+1}-t^{a-1} & 1\leq a< [\frac{\la_{i+1}-\la_i}2] \\ t^{a+\epsilon}-t^{a-1} & 1\leq a= [\frac{\la_{i+1}-\la_i}2]
\end{cases}
\end{equation}
where $\epsilon\equiv \la_{i+1}-\la_{i} (mod\, 2)$.
For $\underline{i}=(i_1, \ldots, i_r)$ and $\underline{a}=(a_1, \ldots, a_r)$ let
\begin{equation}
C(S_{\underline{i}, \underline{a}})=C(S_{i_1, a_1})C(S_{i_2, a_2})\cdots C(S_{i_r, a_r})
\end{equation}
where the product order follows that of $S_{i_1, a_1}S_{i_2, a_2}\cdots S_{i_r, a_r}\la$, i.e. from the right to the left.
In particular, when $t=0$, $C(S_{\underline{i}, \underline{a}})=0$ unless all $a_i=1$, then $C(S_{\underline{i}, \underline{1}})=(-1)^r$
which is possible only when $\la_{i+1}-\la_i\geq 2$.
When $t=-1$, $C(S_{\underline{i}, \underline{a}})=0$ unless when all $a_i=0$ and $C(S_{\underline{i}, \underline{0}})=(-1)^r$.

Let $\mu$ be a composition and $\lambda$ be a partition. Denote
\begin{align}\label{e:def}
B(\lambda, \mu)\triangleq \sum\limits_{\underline{i},\underline{a}}C(S_{\underline{i},\underline{a}})
\end{align}
summed over $\underline{i}=(i_1,i_2,\ldots,i_r)$, $\underline{a}=(a_1,a_2,\ldots,a_r)$ such that $S_{\underline{i},\underline{a}}\mu=\lambda.$

\begin{prop} \cite{Jing2} \label{p:straight} Suppose $\mu$ is a composition, then
\begin{equation}\label{e:straight2}
H_{\mu}=\sum\limits_{\lambda\vdash |\mu|} B(\lambda,\mu) H_{\la}.
\end{equation}
\end{prop}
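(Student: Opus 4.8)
The plan is to prove \eqref{e:straight2} by isolating a single \emph{local} straightening move and then iterating it, with the induction organized by the statistic $n(\mu)=\sum_i(i-1)\mu_i$. Since the $H_\lambda$ with $\lambda$ a partition are linearly independent (they realize the Hall--Littlewood functions), the expansion of $H_\mu$ in this family is unique, so it suffices to exhibit \emph{some} terminating rewriting of $H_\mu$ into the $H_\lambda$ and to check that the accumulated coefficients are exactly the $B(\lambda,\mu)$ of \eqref{e:def}. First I would record how $n$ changes under a move $S_{i,a}$: replacing an adjacent pair $(\lambda_i,\lambda_{i+1})$ with $\lambda_i<\lambda_{i+1}$ by $(\lambda_{i+1}-a,\lambda_i+a)$ alters $n$ by $-(\lambda_{i+1}-\lambda_i-a)$, which is strictly negative for every admissible $a$ (including $a=0$), while all parts stay positive and $n\ge0$ is bounded below and minimized exactly at partitions. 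This guarantees that repeatedly resolving ascents terminates and supplies the induction parameter.

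The heart of the argument is the single-step identity: for a composition with an ascent $\lambda_i<\lambda_{i+1}$ at position $i$,
\begin{align*}
H_{\cdots\lambda_i\lambda_{i+1}\cdots}=\sum_{a\ge0}C(S_{i,a})\,H_{S_{i,a}(\cdots\lambda_i\lambda_{i+1}\cdots)}.
\end{align*}
Because \eqref{e:com1} is a relation among the operators themselves, it applies to the adjacent factors $H_{\lambda_i}H_{\lambda_{i+1}}$ while leaving every other factor fixed, so the claim reduces to a statement about $H_mH_n$ with $m=\lambda_i<n=\lambda_{i+1}$. Writing \eqref{e:com1} as $H_mH_n=tH_nH_m+tH_{m+1}H_{n-1}-H_{n-1}H_{m+1}$ and iterating on the carried term $tH_{m+1}H_{n-1}$, I would track the coefficient of each descending pair $H_{n-a}H_{m+a}$ (the image of $S_{i,a}$). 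At stage $a-1$ one produces $-t^{a-1}H_{n-a}H_{m+a}$, and at stage $a$ the sorting part of the relation contributes $+t^{a+1}H_{n-a}H_{m+a}$; the telescoping collapses to $t^{a+1}-t^{a-1}$, matching the generic value in \eqref{e:straight} for $1\le a<[\tfrac{\lambda_{i+1}-\lambda_i}2]$, while the pure swap at $a=0$ carries the leading coefficient $t$.

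The two boundary cases at $a=[\tfrac{\lambda_{i+1}-\lambda_i}2]$ are the main bookkeeping obstacle. When the gap $\lambda_{i+1}-\lambda_i$ is even the iteration terminates at the equal pair $H_cH_c$ with $c=\tfrac{\lambda_i+\lambda_{i+1}}2$, which is produced only by the two middle terms of \eqref{e:com1} and admits no further three-term reduction, so its coefficient is $t^{a}-t^{a-1}$; when the gap is odd the iteration reaches the adjacent pair $H_{c}H_{c+1}$, and here I would invoke the specialized relation \eqref{e:com5}, namely $H_cH_{c+1}=tH_{c+1}H_c$, which supplies one extra factor $t$ and yields $t^{a+1}-t^{a-1}$. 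These two outcomes are precisely the $\epsilon$-corrected value $t^{a+\epsilon}-t^{a-1}$ with $\epsilon\equiv\lambda_{i+1}-\lambda_i\pmod2$. Finally I would assemble the global statement by induction on $n(\mu)$: applying the single-step identity to an ascent and recursing (legitimate since every $S_{i,a}\mu$ has strictly smaller $n$) expresses $H_\mu$ as a sum over the sorting sequences $(\underline i,\underline a)$ with $S_{\underline i,\underline a}\mu$ a partition, weighted by $C(S_{\underline i,\underline a})$; grouping by terminal partition and invoking \eqref{e:def} gives $\sum_\lambda B(\lambda,\mu)H_\lambda$, and uniqueness of the $H_\lambda$-expansion shows the coefficients are independent of the order in which ascents were resolved.
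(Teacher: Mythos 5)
Your overall strategy is sound and is essentially the straightening argument behind the cited source \cite{Jing2} (the present paper gives no proof of Proposition \ref{p:straight}, only the citation). Your single-step identity is correct: iterating \eqref{e:com1} on the carried term, the descending pair $H_{n-a}H_{m+a}$ picks up $-t^{a-1}$ from the lookahead at stage $a-1$ and $+t^{a+1}$ from the sorting term at stage $a$, giving $t^{a+1}-t^{a-1}$; for even gap the two middle terms of \eqref{e:com1} merge on the equal pair to give $t^{a}-t^{a-1}$, and for odd gap the relation $H_cH_{c+1}=tH_{c+1}H_c$ supplies the extra $t$, giving $t^{a+1}-t^{a-1}$ --- exactly the three cases of \eqref{e:straight}. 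The termination statistic (each move changes $n$ by $-(\mu_{i+1}-\mu_i-a)<0$) and the uniqueness argument (valid because $H_\lambda.1=Q_\lambda(t)$ are linearly independent for partitions $\lambda$) are also fine.

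The gap is in your last sentence, where you ``invoke \eqref{e:def}.'' Your induction expresses $H_\mu$ as a sum over the move-sequences generated by \emph{one chosen} resolution strategy, and your uniqueness remark correctly shows this total is the same for every strategy; but it does \emph{not} show that it equals the sum over \emph{all} admissible sequences, which is what \eqref{e:def} literally prescribes. These differ in general, because distinct orders of resolving disjoint ascents produce distinct sequences $(\underline{i},\underline{a})$ ending at the same partition, and the literal sum counts each of them. Concretely, take $\mu=(1,2,1,2)$: repeated use of $H_nH_{n+1}=tH_{n+1}H_n$ gives
\begin{align*}
H_1H_2H_1H_2=t^3H_2H_2H_1H_1,
\end{align*}
so the true coefficient of $H_{(2,2,1,1)}$ is $t^3$; yet there are two admissible sequences reaching $(2,2,1,1)$ --- resolve position $1$ first ($S_{1,0}$, then $S_{3,0}$, then $S_{2,0}$) or position $3$ first ($S_{3,0}$, then $S_{1,0}$, then $S_{2,0}$) --- each of weight $t\cdot t\cdot t=t^3$, so the ``all sequences'' sum is $2t^3$. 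The repair is to fix once and for all a canonical straightening order (say, always resolve the leftmost ascent) and to read \eqref{e:def} as the sum over the sequences that this order generates; this reading is forced if the proposition (and its use in Theorem \ref{t:Recurrence Formula}) is to be correct. With that convention your induction is precisely a complete proof; without it, the final identification of the accumulated coefficients with $B(\lambda,\mu)$ fails.
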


We remark that $\lambda$ appears only when $\lambda\geq\mu$ in \eqref{e:straight2}. Let $\mu$ be a composition and $\lambda$ be a partition. If there exists $\underline{i}=(i_1,i_2,\ldots,i_r)$, $\underline{a}=(a_1,a_2,\ldots,a_r)$ such that $S_{\underline{i},\underline{a}}\mu=\lambda.$ Then $\sum\limits_{i=1}^k\lambda_i\geq\sum\limits_{i=1}^k\mu_i,$ $k=1,2,\cdots$

\begin{prop} \cite{Jing1, Jing91} \label{t:HL} (1) Let $\lambda=(\lambda_{1},\ldots ,\lambda_{l})$ be a partition.
The vertex operator products  $H_{\lambda_{1}}\cdots H_{\lambda_{l}}. 1$ is the
Hall-Littlewood function $Q_{\la}(t)$:
\begin{equation}\label{e:HL}
H_{\lambda_{1}}\cdots H_{\lambda_{l}}. 1=Q_{\la}(t)=
\prod\limits_{i<j} \dfrac{1-R_{ij}}{1-tR_{ij}}q_{\lambda_{1}}\cdots q_{\lambda_{l}}
\end{equation}
where the raising operator $R_{ij}q_{\la}=q_{(\la_{1},\ldots ,\la_{i}+1,\ldots ,\la_{j}-1,\ldots , \la_{l})}$.

(2) Let $\xi=(\xi_1,\xi_2,\ldots,\xi_l)$ be a strict partition, then
\begin{align}
Q_{\xi}=Q_{\xi_1}Q_{\xi_2}\cdots Q_{\xi_l}.1
\end{align}
are the Schur Q-function indexed by the strict partition $\xi$.
Moreover, $Q_{\xi}.1$, $\xi$ strict, form an orthogonal $\mathbb{Z}$-base of $\Gamma$ under the specialized
inner product $\langle \ , \ \rangle_{t=-1}$, explicitly
\begin{align}\label{e:orth}
\langle Q_{\lambda}.1, Q_{\xi}.1\rangle|_{t=-1}=2^{l(\lambda)}\delta_{\lambda\xi},  \qquad \lambda, \xi\in\mathcal{SP}.
\end{align}
\end{prop}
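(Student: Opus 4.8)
The plan is to obtain both statements from the operator product expansions produced by pushing all annihilation factors to the right onto the vacuum, and then to read off raising-operator expressions by extracting coefficients.

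For part (1) I would first note that the annihilation exponential in \eqref{e:hallop} kills $1$, so $H(z).1=q(z)=\sum_{n\geq0}q_nz^n$ by \eqref{e:qfcn1}. To compute $H(z_1)\cdots H(z_l).1$ I would normal order: the only nontrivial contribution comes from contracting the annihilation factor of each $H(z_i)$ against the creation factor of each $H(z_j)$ with $i<j$. Using $\left[\frac{\partial}{\partial p_n},p_m\right]=\delta_{mn}$ and $\sum_{n\geq1}\frac{1-t^n}{n}x^n=\log\frac{1-tx}{1-x}$, each contraction yields the scalar $\frac{z_i-z_j}{z_i-tz_j}$, so
\[
H(z_1)\cdots H(z_l).1=\prod_{i<j}\frac{z_i-z_j}{z_i-tz_j}\,\prod_{i=1}^{l}q(z_i).
\]
Extracting the coefficient of $z_1^{\la_1}\cdots z_l^{\la_l}$ and reading the substitution $z_j/z_i\mapsto R_{ij}$ reproduces $\prod_{i<j}\frac{1-R_{ij}}{1-tR_{ij}}q_{\la_1}\cdots q_{\la_l}=Q_{\la}(t)$. (Alternatively, relation \eqref{e:com1} is exactly the straightening law encoded by $\frac{1-R_{ij}}{1-tR_{ij}}$, so induction on $l$ via \eqref{e:com1} and \eqref{e:com4} gives the same identity.)

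For the first assertion of part (2) I would run the identical computation for $Q(z_1)\cdots Q(z_l).1$, now using the contraction already recorded in the proof of Proposition \ref{p:com}, namely $Q(z)Q(w)=\,:\!Q(z)Q(w)\!:\frac{z-w}{z+w}$. Since the annihilation factor of $Q(z)$ kills $1$ while the creation factor produces $q(z)=\exp\bigl(\sum_{n\ \mathrm{odd}}\frac{2}{n}p_nz^n\bigr)$, I obtain
\[
Q(z_1)\cdots Q(z_l).1=\prod_{i<j}\frac{z_i-z_j}{z_i+z_j}\,\prod_{i=1}^{l}q(z_i),
\]
and extracting coefficients gives the classical raising-operator form $\prod_{i<j}\frac{1-R_{ij}}{1+R_{ij}}q_{\xi_1}\cdots q_{\xi_l}$ of the Schur $Q$-function $Q_{\xi}$ for strict $\xi$.

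The orthogonality \eqref{e:orth} is the delicate point and I expect it to be the main obstacle, because the adjoint here is taken in the specialization $\langle\ ,\ \rangle_{t=-1}$ rather than with respect to the operator $Q^*(z)$ of \eqref{e:schurQop1}. So I would first compute the adjoint of $Q_n$ directly: on $\Gamma$ the even derivatives act as zero, and at $t=-1$ multiplication by $p_n$ ($n$ odd) and $\frac{\partial}{\partial p_n}$ become mutually adjoint up to the scalar $n/2$. A short exponential-adjoint computation then gives $Q(z)^{\dagger}\big|_{t=-1}=Q(-z^{-1})$, i.e. $Q_n^{\dagger}=(-1)^nQ_{-n}$ — precisely the asymmetry flagged by $Q(-z)\neq Q^*(z)$. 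Consequently
\[
\langle Q_{\la_1}\cdots Q_{\la_l}.1,\ Q_{\xi_1}\cdots Q_{\xi_m}.1\rangle_{t=-1}=(-1)^{|\la|}\langle 1,\ Q_{-\la_l}\cdots Q_{-\la_1}Q_{\xi_1}\cdots Q_{\xi_m}.1\rangle_{t=-1}.
\]
I would evaluate the right-hand side by fermionic Wick calculus: the Clifford relations of Proposition \ref{p:com}(2) together with $Q_{-n}.1=\delta_{n,0}$ from \eqref{e:com4} force all operators to pair, each contraction $\{Q_{-\la_i},Q_{\xi_j}\}=(-1)^{\xi_j}2\delta_{\la_i,\xi_j}$ contributing a factor $2$. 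For strict $\la,\xi$ the distinctness of parts makes the surviving pairing unique and adjacent at each step (so no extra reordering signs appear), the accumulated sign $(-1)^{|\xi|}$ cancels the prefactor $(-1)^{|\la|}$, and one is left with $2^{l(\la)}\delta_{\la\xi}$. Tracking the reordering signs of the anticommuting $Q_n$ is the only genuine subtlety, and it is exactly the distinctness of the parts that keeps it under control.
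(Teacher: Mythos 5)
Your proposal is correct, and its computational core is the same as the paper's: the paper proves part (2) by exactly your normal-ordering argument, $Q(z_1)\cdots Q(z_l).1=\prod_{i<j}\frac{z_i-z_j}{z_i+z_j}\exp\bigl(\sum_{n\geq1,\,\mathrm{odd}}\frac{2}{n}p_n(z_1^n+\cdots+z_l^n)\bigr)$, followed by extraction of the coefficient of $z_1^{\xi_1}\cdots z_l^{\xi_l}$. The differences lie in what you prove versus what the paper cites. For part (1) the paper only cites \cite{Jing1}; your contraction computation producing the factor $\frac{z_i-z_j}{z_i-tz_j}$ and the raising-operator reading is precisely the argument of that reference, so nothing is lost. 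More substantially, the paper offers no argument for the orthogonality \eqref{e:orth}: once $Q_{\xi}$ is identified with the classical Schur $Q$-function, \eqref{e:orth} is inherited from the classical theory (\cite{M}, \cite{Jing91}). You instead prove it inside the vertex-operator formalism, via the $t=-1$ adjoint formula $Q_n^{\dagger}=(-1)^nQ_{-n}$ and Wick calculus with the Clifford relations of Proposition \ref{p:com}(2) together with $Q_{-n}.1=0$ for $n>0$. This argument is sound, including the two subtleties you flag: the specialized form only makes sense on $\Gamma$ (since $z_{\lambda}(t)$ has a pole at $t=-1$ whenever $\lambda$ has an even part), and on $\Gamma$ the even-index derivatives vanish, which is what makes $Q(z)$ preserve $\Gamma$ and yields $Q(z)^{\dagger}|_{t=-1}=Q(-z^{-1})$ there; strictness then forces each surviving pairing to be unique and adjacent when $\lambda=\xi$ (accumulating $(-1)^{|\xi|}2^{l(\xi)}$, which cancels the prefactor $(-1)^{|\lambda|}$), while any unmatched mode annihilates the vacuum and any leftover positive-degree factor pairs to zero against $1$ by degree, giving $2^{l(\lambda)}\delta_{\lambda\xi}$. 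What your route buys is a self-contained proof of \eqref{e:orth} in the spirit of the Clifford-algebra argument of \cite{Jing91}, adapted to the new operator; what the paper's route buys is brevity, at the cost of resting \eqref{e:orth} on the identification with the classical $Q$-functions.
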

\begin{proof} Part (1) is from \cite{Jing1}. Since our vertex operator $Q(z)$ is different from that of \cite{Jing91}, we explain why the new vertex operator
also realizes the Schur Q-functions. From the argument in proving \eqref{e:com5} in Prop. \ref{p:com} it follows that
\begin{align*}
Q(z_1)Q(z_2)\cdots Q(z_l).1&=\prod_{i<j}\frac{z_i-z_j}{z_i+z_j}:Q(z_1)Q(z_2)\cdots Q(z_l):.1\\
&=\prod_{i<j}\frac{z_i-z_j}{z_i+z_j}\exp(\sum_{n\geq 1, \mathrm{odd}}\frac{2p_n}{n}(z_1^n+\cdots n_l^n)).
\end{align*}
Taking coefficient of $z_1^{\xi_1}\cdots z_l^{\xi_l}$, we obtain that  $Q_{\xi}$ is exactly the Schur Q-function indexed by $\xi$ (cf. \cite{Jing91}).
\end{proof}

\section{Spin Hall-Littlewood polynomials and vertex operators}\label{S:spinHL}
Wan and Wang have introduced a very interesting spin analogue of Kostka(-Foulkes) polynomials and shown that these polynomials enjoy favorable properties parallel to those of the Kostka polynomials \cite{W}.

\begin{defn} \cite{W} \label{t:spin K-F}
The spin Kostka polynomials $K^-_{\xi\mu}(t)$ for $\xi\in\mathcal{SP}$ and $\mu\in\mathcal{P}$ are defined by
\begin{align}\label{e:spin K-F}
Q_{\xi}(x)=\sum\limits_{\mu}K^-_{\xi\mu}(t)P_{\mu}(x;t),
\end{align}
where $Q_{\xi}(x)$ (resp.$P_{\mu}(x;t)$) are Schur's Q-functions (resp. Hall-Littlewood functions).
\end{defn}
From the above discussion and Theorem \ref{t:HL}, it is clear that the spin Kostka polynomials can be expressed as the matrix coefficients:
\begin{align*}
K^-_{\xi\mu}(t)&=\langle Q_{\mu}(x;t), Q_{\xi}(x) \rangle\\
&=\langle H_{\mu_1}H_{\mu_2}\cdots H_{\mu_l}.1, Q_{\xi_1}Q_{\xi_2}\cdots Q_{\xi_k}.1 \rangle.
\end{align*}

To compute the matrix coefficients, we first get the commutation relations by usual techniques of vertex operators:
\begin{align}
\label{e:hallop1}
H^*(z)Q(w)(w-tz)+Q&(w)H^*(z)(z+w)=2(1-t)z\delta(\frac{w}{z})\tilde h(z),
\\ \label{e:hallop2}
\tilde h^{*}(z)H(w)&=H(w)\tilde h^{*}(z)\frac{w+z}{w-tz},\\\label{e:hallop2}
Q(z)\tilde h(w)&=\tilde h(w)Q(z)\frac{z-tw}{z+w}.
\end{align}

We remark that if the old vertex operator $\tilde{Q}(w)$ from \cite{Jing91} were used, then the commutation relations between $H^*(z)$ and $\tilde{Q}(w)$ would have been an infinite quadratic relation.

Taking coefficients we obtain the following commutation relations.
\begin{prop}\label{p:rel1}
The commutation relations between the Hall-Littlewood vertex operators and Schur's $Q$-function operators are:
\begin{align}\label{e:rel1}
H^{*}_{n}Q_{m}&=t^{-1}H^{*}_{n-1}Q_{m-1}+t^{-1}Q_{m}H^{*}_{n}+t^{-1}Q_{m-1}H^*_{n-1}\\ \nonumber
&\qquad+2(1-t^{-1})\tilde h_{m-n},\\ \label{e:rel2}
\tilde h^{*}_{m}H_{n}&=H_{n}\tilde h^{*}_{m}+(1+t)\sum\limits_{k=0}^{m-1}t^{m-k-1}H_{n-m+k}\tilde h^*_{k},\\\label{e:rel3}
Q_n\tilde h_m&=\tilde h_mQ_n+(1+t)\sum\limits_{k=0}^{m-1}(-1)^{m-k}\tilde h_kQ_{n-k+m}.
\end{align}
\end{prop}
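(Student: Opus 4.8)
\medskip

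The plan is to derive the three scalar recursions \eqref{e:rel1}--\eqref{e:rel3} by extracting coefficients from the three operator-valued identities \eqref{e:hallop1}--\eqref{e:hallop2} that precede the statement, so the real content is establishing those generating-function relations and then reading off the modes. First I would prove each generating-function identity by the standard normal-ordering computation. For a relation of the form $A(z)B(w) = B(w)A(z)\,f(z,w)$, one writes both $A(z)B(w)$ and $B(w)A(z)$ as a common normal-ordered product times a scalar prefactor coming from pushing the annihilation part of one operator past the creation part of the other; the prefactor is computed from the elementary identity $\exp(\sum_{n\ge1}\alpha_n z^{-n})\cdot\exp(\sum_{n\ge1}\beta_n w^{n}) = \exp(\sum_{n\ge1}\alpha_n w^{n})$-type contractions, i.e. $\frac{\partial}{\partial p_n}$ acting on $\exp(c\,p_n w^n)$ pulls down a factor $c\,w^n$. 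Summing the resulting geometric series in $w/z$ (valid for $|z|>|w|$ or the reverse, as appropriate) yields the closed rational factors $\frac{w+z}{w-tz}$ and $\frac{z-tw}{z+w}$ in \eqref{e:hallop2}. For the mixed relation \eqref{e:hallop1}, the two orderings $H^*(z)Q(w)$ and $Q(w)H^*(z)$ produce prefactors that are rational in $w/z$ but with \emph{poles} at $w=tz$ and $w=-z$ respectively, so their difference is not a single normal-ordered product; this is exactly where the delta function $\delta(w/z)=\sum_{n\in\mathbb Z}(w/z)^n$ and the spin operator $\tilde h(z)$ appear, via the standard identity that the difference of two rational expansions of $\frac{1}{w\mp z}$ across the two regions $|z|\gtrless|w|$ is a formal delta function.

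\medskip

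Once the operator identities are in hand, I would extract modes. For \eqref{e:rel3}, substitute $Q(z)=\sum_m Q_m z^m$ and $\tilde h(w)=\sum_{k\ge0}\tilde h_k w^k$ into $Q(z)\tilde h(w)(z+w) = \tilde h(w)Q(z)(z-tw)$ (the cleared-denominator form of \eqref{e:hallop2}), and compare coefficients of a fixed monomial $z^{n+1}w^{m}$ (or the appropriate shift). The factor $(z-tw)/(z+w)$ expanded as a geometric series $\sum_{j\ge0}(-1)^j(w/z)^j(1-tw/z)$ is what produces the finite sum $\sum_{k=0}^{m-1}(-1)^{m-k}(1+t)$ with the shifted index $Q_{n-k+m}$; the truncation of the sum at $m-1$ reflects that $\tilde h_k$ annihilates for $k<0$ and that only finitely many contraction terms survive. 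The derivation of \eqref{e:rel2} from $\tilde h^*(z)H(w)=H(w)\tilde h^*(z)\frac{w+z}{w-tz}$ is entirely parallel, with the expansion of $\frac{w+z}{w-tz}=\frac{1+z/w}{1-tz/w}$ generating the factor $t^{m-k-1}$ and the summand $H_{n-m+k}\tilde h^*_k$.

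\medskip

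The mixed relation \eqref{e:rel1} is the main obstacle and requires the most care. Extracting the coefficient of $z^{-n}w^{m}$ from \eqref{e:hallop1} must handle three sources of terms simultaneously: the two polynomial factors $(w-tz)$ and $(z+w)$ multiplying the reordered products $H^*(z)Q(w)$ and $Q(w)H^*(z)$, and the inhomogeneous delta-function term $2(1-t)z\,\delta(w/z)\tilde h(z)$. The factor $(w-tz)$ acting on $H^*(z)Q(w)=\sum H^*_n Q_m z^{-n}w^m$ shifts indices in two ways, producing the combination $t^{-1}H^*_{n-1}Q_{m-1}+t^{-1}H^*_nQ_m$ after one solves for $H^*_nQ_m$ (note the division by $t$, which is why $t^{-1}$ pervades \eqref{e:rel1}); the factor $(z+w)$ on $Q(w)H^*(z)$ similarly contributes $t^{-1}Q_mH^*_n + t^{-1}Q_{m-1}H^*_{n-1}$; and the delta term contributes $2(1-t^{-1})\tilde h_{m-n}$ since $\delta(w/z)$ forces the equality of the two spectral variables and $z\,\tilde h(z)$ contributes $\tilde h_{m-n}$ at the relevant degree. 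The bookkeeping of which index-shift lands on which side, and confirming that the overall normalization gives precisely the coefficients $t^{-1}$ and $2(1-t^{-1})$ rather than $t$ or $2(1-t)$, is the delicate part; I would double-check it against the low-degree cases \eqref{e:com7a}--\eqref{e:com7b} applied to the vacuum, where \eqref{e:com4} collapses most terms and the surviving identity can be verified by hand.
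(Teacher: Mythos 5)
Your overall route---prove the operator identities \eqref{e:hallop1}--\eqref{e:hallop2} by normal ordering and then read off modes---is exactly the paper's proof (it literally consists of the phrase ``taking coefficients'' applied to those identities), and your coefficient extractions for \eqref{e:rel2}, \eqref{e:rel3} and the index bookkeeping for \eqref{e:rel1} are essentially sound. The genuine gap is in your account of the mechanism behind \eqref{e:hallop1}, which you correctly single out as the crux. The two contractions are
\begin{align*}
H^*(z)Q(w)&=\,:H^*(z)Q(w):\,\frac{z+w}{z-w}\qquad(|z|>|w|),\\
Q(w)H^*(z)&=\,:H^*(z)Q(w):\,\frac{w-tz}{w-z}\qquad(|w|>|z|),
\end{align*}
so \emph{both} orderings have their unique simple pole at $w=z$; the points $w=-z$ and $w=tz$ are \emph{zeros} of the respective numerators, not poles. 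Your claim that the prefactors have poles at $w=tz$ and $w=-z$ gets this backwards, and the mechanism you then invoke would fail: the difference of the two expansions of $\frac{1}{w-tz}$ is supported at $w=tz$ and produces $\delta\bigl(\tfrac{w}{tz}\bigr)$, not the $\delta\bigl(\tfrac{w}{z}\bigr)$ that appears in \eqref{e:hallop1}. What actually happens is that multiplying $H^*(z)Q(w)$ by $(w-tz)$ and $Q(w)H^*(z)$ by $(z+w)$ supplies to each product the numerator factor it lacks, so that both become expansions, in the two complementary regions, of one and the same rational function $\frac{(z+w)(w-tz)}{z-w}$; their sum then collapses, by the difference-of-expansions identity applied at the common pole $w=z$, to $:H^*(z)Q(w):\,(z+w)(w-tz)\,\tfrac1z\,\delta\bigl(\tfrac wz\bigr)$.

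The second missing ingredient is the identification of what survives on the right-hand side: you never compute the normal-ordered product on the diagonal. Using $f(z,w)\,\delta\bigl(\tfrac wz\bigr)=f(z,z)\,\delta\bigl(\tfrac wz\bigr)$, the factor $(z+w)(w-tz)\tfrac 1z$ becomes $2(1-t)z$, and the creation exponents of $H^*$ and $Q$ combine as $-\tfrac{1-t^n}{n}+\tfrac{2}{n}=\tfrac{t^n-(-1)^n}{n}$ for $n$ odd and $-\tfrac{1-t^n}{n}=\tfrac{t^n-(-1)^n}{n}$ for $n$ even, while the two annihilation exponentials cancel; hence $:H^*(z)Q(z):\,=\tilde h(z)$. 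Without this computation the inhomogeneous term $2(1-t^{-1})\tilde h_{m-n}$ in \eqref{e:rel1} cannot be identified, and it is precisely the point of the paper's modified operator $Q(z)$: it makes this contraction rational, avoiding the infinite relations that the operator of \cite{Jing91} would give. A smaller caution: when you actually prove the first line of \eqref{e:hallop2} you will find the contraction factor is $\frac{z+w}{z-tw}$, to be expanded in $w/z$ with $\tilde h^*(z)=\sum_m\tilde h^*_mz^{-m}$; taking the printed factor $\frac{w+z}{w-tz}$ at face value and expanding in $z/w$, as you propose, yields the summand $H_{n+m-k}\tilde h^*_k$ (which is not even degree-consistent) instead of $H_{n-m+k}\tilde h^*_k$, so sorting out that convention is part of the work of deriving \eqref{e:rel2}.
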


Now we can state our formulas to compute the spin Kostka polynomials. To this end, we prepare some necessary notations.
Let $\lambda=(\lambda_1,\lambda_2,\ldots,\lambda_l)$ and $\mu=(\mu_1,\mu_2,\ldots,\mu_m)$ be (strict)partitions.
We denote $\lambda^{[i]}=(\lambda_{i+1},\ldots,\lambda_l)$, $\lambda^{\hat{i}}=(\lambda_1,\ldots,\lambda_{i-1},\lambda_{i+1},\ldots,\lambda_l)$, and $\lambda-\mu=(\lambda_1-\mu_1,\lambda_2-\mu_2,\cdots).$
\begin{thm}\label{t:iterative}
For strict partition $\xi=(\xi_1,\xi_2,\ldots,\xi_l)$ and partition $\mu=(\mu_1,\mu_2,\ldots,\mu_m)$ and integer $k$,
\begin{align}\label{e:iterative}
H_{k}^{*}Q_{\xi}&=\sum\limits_{i=1}^{l} (-1)^{i-1}2\tilde h_{\xi_i-k}Q_{\xi^{\hat{i}}}, 
\\\label{e:hH}
\tilde h^{*}_{k}H_{\mu}&=\sum\limits_{\tau\models k}t^{k-l(\tau)}(1+t)^{l(\tau)}H_{\mu-\tau}.
\end{align}
\end{thm}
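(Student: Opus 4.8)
The plan is to establish the two identities by separate routes: \eqref{e:hH} by iterating the single commutation relation \eqref{e:rel2}, and \eqref{e:iterative} by normal-ordering the operator product $H^{*}(z)Q(w_1)\cdots Q(w_l).1$ and reading off coefficients, the point being that $H^{*}_kQ_\xi=[z^{-k}w_1^{\xi_1}\cdots w_l^{\xi_l}]\,H^{*}(z)Q(w_1)\cdots Q(w_l).1$ by Proposition~\ref{t:HL}.

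For \eqref{e:hH} I would push $\tilde h^{*}_k$ rightward through $H_{\mu_1}H_{\mu_2}\cdots H_{\mu_m}.1$ one factor at a time. By \eqref{e:rel2}, when the traveling operator $\tilde h^{*}_a$ meets $H_{\mu_i}$ it either commutes past it with its index unchanged, or deposits an amount $d\geq 1$, replacing $H_{\mu_i}$ by $H_{\mu_i-d}$, dropping its own index from $a$ to $a-d$, and emitting the scalar $(1+t)t^{d-1}$. When it finally reaches the vacuum it acts by $\tilde h^{*}_a.1=\delta_{a,0}$ (see \eqref{e:com4}), so only those branches whose deposits $d_1,\dots,d_m\geq 0$ sum to $k$ survive. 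Writing $\tau=(d_1,\dots,d_m)$ as a weak composition of $k$ aligned with the parts of $\mu$, with $l(\tau)$ the number of nonzero parts, the accumulated scalar is $\prod_{d_i>0}(1+t)t^{d_i-1}=t^{k-l(\tau)}(1+t)^{l(\tau)}$ and the surviving string is $H_{\mu-\tau}$; summing gives \eqref{e:hH}. The only thing to verify is this exponent bookkeeping.

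For \eqref{e:iterative} a direct induction on $l$ through \eqref{e:rel1} is awkward, because lowering $\xi_1$ can destroy strictness and because $Q$ and $\tilde h$ fail to commute. Instead I would normal-order $H^{*}(z)Q(w_1)\cdots Q(w_l).1$ in the region $|z|>|w_1|>\cdots>|w_l|$. The $H^{*}$--$Q$ contractions contribute $\prod_i\frac{z+w_i}{z-w_i}$, the $Q$--$Q$ contractions contribute $\prod_{i<j}\frac{w_i-w_j}{w_i+w_j}$ (as in the proof of Proposition~\ref{t:HL}), and the surviving creation part equals $A(z)\prod_iB(w_i)$ with $A(z)=\exp(-\sum_n\frac{1-t^n}{n}p_nz^n)$ and $B(w)=Q(w).1=\exp(\sum_{n\ \mathrm{odd}}\frac2n p_nw^n)$. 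The key identity, already latent in the right-hand side of \eqref{e:hallop1}, is $A(w)B(w)=\tilde h(w)$.

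To extract $H^{*}_kQ_\xi$ I take the coefficient of $z^{-k}$ for $k\geq 1$ (the range needed, as the parts of $\mu$ are positive). Since $A(z)$ has only nonnegative powers of $z$, all negative powers of $z$ come from $\prod_i\frac{z+w_i}{z-w_i}$; a partial-fraction expansion gives simple poles at $z=w_i$ with residue $2w_i\prod_{j\neq i}\frac{w_i+w_j}{w_i-w_j}$, whence $[z^{-k}]\big(\prod_i\frac{z+w_i}{z-w_i}A(z)\big)=\sum_i 2w_i^{k}A(w_i)\prod_{j\neq i}\frac{w_i+w_j}{w_i-w_j}$. The main step is then to multiply each residue factor $\prod_{j\neq i}\frac{w_i+w_j}{w_i-w_j}$ against the global $Q$--$Q$ factor $\prod_{i'<j'}\frac{w_{i'}-w_{j'}}{w_{i'}+w_{j'}}$: the pairs containing $i$ cancel, the $i-1$ pairs $(j,i)$ with $j<i$ each flip a sign, and what remains is $(-1)^{i-1}$ times the $Q$--$Q$ factor for the index set with $i$ removed. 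Combined with $A(w_i)B(w_i)=\tilde h(w_i)$, the $i$-th term becomes $2(-1)^{i-1}w_i^{k}\tilde h(w_i)\,\big(Q(w_1)\cdots\widehat{Q(w_i)}\cdots Q(w_l).1\big)$; taking the coefficient of $w_1^{\xi_1}\cdots w_l^{\xi_l}$ sends $w_i^{k}\tilde h(w_i)$ to $\tilde h_{\xi_i-k}$ and the remaining series to $Q_{\xi^{\hat i}}$, which is \eqref{e:iterative}. I expect this sign/residue bookkeeping to be the main obstacle: one must take the partial fraction and the $Q$--$Q$ contraction in the same region so that the factors cancel cleanly to $(-1)^{i-1}$, and must note that the clean form holds only for $k\geq 1$ (at $k=0$ the constant term of the partial fraction contributes an extra summand).
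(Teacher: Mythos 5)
Your proposal is correct, and for \eqref{e:iterative} it takes a genuinely different route from the paper. For \eqref{e:hH} you and the paper coincide: the authors dispose of that identity in one sentence (``similarly shown by \eqref{e:rel2} and induction on $l(\mu)$''), which is exactly your deposit-bookkeeping iteration; your only loose citation is that $\tilde h^*_a.1=\delta_{a,0}$ is not literally part of \eqref{e:com4}, but it does hold since for $a\geq1$ the operator $\tilde h^*_a$ is a constant-term-free polynomial in the $p_n^*$ and so annihilates the vacuum. For \eqref{e:iterative}, the paper argues by induction on $k+|\xi|$: it expands $H^*_kQ_{\xi_1}\cdots Q_{\xi_l}$ with \eqref{e:rel1}, applies the induction hypothesis to the three operator terms, and telescopes the result using \eqref{e:rel3}. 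You instead compute the whole generating function $H^{*}(z)Q(w_1)\cdots Q(w_l).1$ in closed form, and your bookkeeping checks out: the $H^{*}$--$Q$ contraction is indeed $\frac{z+w_i}{z-w_i}$, the residues are $2w_i\prod_{j\neq i}\frac{w_i+w_j}{w_i-w_j}$, the pairs $(i,j)$, $j>i$, cancel while the $i-1$ pairs $(j,i)$, $j<i$, each give $-1$, and $A(w)B(w)=\tilde h(w)$ because the exponent coefficient is $-\frac{1-t^n}{n}+\frac{2}{n}=\frac{t^n-(-1)^n}{n}$ for $n$ odd and $-\frac{1-t^n}{n}=\frac{t^n-(-1)^n}{n}$ for $n$ even. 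What your route buys: it is induction-free, uniform in $l$, reuses the contractions already computed in Propositions \ref{p:com} and \ref{t:HL}, and it sidesteps a genuine awkwardness of the paper's induction, which must apply the hypothesis to possibly non-strict tuples $(\xi_1-1,\xi_2,\ldots)$ and to the shifted index $k-1$. Moreover, your restriction to $k\geq1$ is not a defect but a necessary correction: the identity actually fails at $k=0$, since $H^*_0Q_1.1=2tp_1$ whereas $2\tilde h_1=2(1+t)p_1$, so the paper's ``integer $k$'' (and the base case $(k,\xi)=(0,(1))$ of its induction) is too generous; only $k=\mu_1\geq1$ is ever used in Theorem \ref{t:Recurrence Formula}. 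What the paper's approach buys in exchange is that it stays entirely at the level of component relations, with no need to choose expansion regions or manipulate rational functions.
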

\begin{proof} We show the first relation by induction on $k+|\xi|$.
The case of $k+|\xi|=1$ is clear. Assume that \eqref{e:iterative} holds for $k+|\xi|=n-1$. Using the induction hypothesis
and \eqref{e:rel1} we have that
\begin{align*}
&H^*_{k}Q_{\xi_1}Q_{\xi_2}\cdots Q_{\xi_l}\\
=&t^{-1}H^*_{k-1}Q_{\xi_1-1}Q_{\xi_2}\cdots Q_{\xi_l}+t^{-1}Q_{\xi_1}H^*_{k}Q_{\xi_2}\cdots Q_{\xi_l}+t^{-1}Q_{\xi_1-1}H^*_{k-1}Q_{\xi_2}\cdots Q_{\xi_l}\\
+&2(1-t^{-1})\tilde h_{\xi_{1}-k}Q_{\xi_2}\cdots Q_{\xi_l}\\
=&t^{-1}(2\tilde h_{\xi_1-k}Q_{\xi_2}\cdots Q_{\xi_l}-2\tilde h_{\xi_2-k+1}Q_{\xi_1-1}Q_{\xi_3}\cdots Q_{\xi_l}+2\tilde h_{\xi_3-k+1}Q_{\xi_1-1}Q_{\xi_2}Q_{\xi_4}\cdots Q_{\xi_l}\\
+&\cdots+(-1)^{l+1}2\tilde h_{\xi_l-k+1}Q_{\xi_1-1}Q_{\xi_2}\cdots Q_{\xi_{l-1}})\\
+&t^{-1}Q_{\xi_1}(2\tilde h_{\xi_2-k}Q_{\xi_3}\cdots Q_{\xi_l}-2\tilde h_{\xi_3-k}Q_{\xi_2}Q_{\xi_4}\cdots Q_{\xi_l}+2\tilde h_{\xi_4-k}Q_{\xi_2}Q_{\xi_3}Q_{\xi_5}\cdots Q_{\xi_l}\\
+&\cdots+(-1)^{l}2\tilde h_{\xi_l-k}Q_{\xi_2}Q_{\xi_3}\cdots Q_{\xi_{l-1}})\\
+&t^{-1}Q_{\xi_1-1}(2\tilde h_{\xi_2-k+1}Q_{\xi_3}\cdots Q_{\xi_l}-2\tilde h_{\xi_3-k+1}Q_{\xi_2}Q_{\xi_4}\cdots Q_{\xi_l}+2\tilde h_{\xi_4-k+1}Q_{\xi_2}Q_{\xi_3}Q_{\xi_5}\cdots Q_{\xi_l}\\
+&\cdots+(-1)^{l}2\tilde h_{\xi_l-k+1}Q_{\xi_2}Q_{\xi_3}\cdots Q_{\xi_{l-1}})\\
+&2(1-t^{-1})\tilde h_{\xi_1-k}Q_{\xi_2}Q_{\xi_3}\cdots Q_{\xi_l}.
\end{align*}
Simplifying the expression, we see the above is 
\begin{align*}
&t^{-1}(2\tilde h_{\xi_1-k}Q_{\xi_2}\cdots Q_{\xi_l}-2\tilde h_{\xi_2-k+1}Q_{\xi_1-1}Q_{\xi_3}\cdots Q_{\xi_l}+2\tilde h_{\xi_3-k+1}Q_{\xi_1-1}Q_{\xi_2}Q_{\xi_4}\cdots Q_{\xi_l}\\
+&\cdots+(-1)^{l+1}2\tilde h_{\xi_l-k+1}Q_{\xi_1-1}Q_{\xi_2}\cdots Q_{\xi_{l-1}})\\
+&2t^{-1}(\tilde h_{\xi_2-k+1}Q_{\xi_1-1}Q_{\xi_3}\cdots Q_{\xi_l}-\tilde h_{\xi_3-k+1}Q_{\xi_1-1}Q_{\xi_2}Q_{\xi_4}\cdots Q_{\xi_l}+\tilde h_{\xi_4-k+1}Q_{\xi_1-1}\cdots Q_{\xi_l}\\
+&\cdots+(-1)^{l}\tilde h_{\xi_l-k+1}Q_{\xi_1-1}Q_{\xi_2}Q_{\xi_3}\cdots Q_{\xi_{l-1}})\\
-&2(\tilde h_{\xi_2-k}Q_{\xi_1}Q_{\xi_3}\cdots Q_{\xi_l}-\tilde h_{\xi_3-k}Q_{\xi_1}Q_{\xi_2}Q_{\xi_4}\cdots Q_{\xi_l}+\tilde h_{\xi_4-k}Q_{\xi_1}Q_{\xi_2}Q_{\xi_3}Q_{\xi_5}\cdots Q_{\xi_l}\\
+&\cdots+(-1)^{l}\tilde h_{\xi_l-k}Q_{\xi_1}Q_{\xi_2}Q_{\xi_3}\cdots Q_{\xi_{l-1}})\\
+&2(1-t^{-1})\tilde h_{\xi_1-k}Q_{\xi_2}Q_{\xi_3}\cdots Q_{\xi_l} \quad(\text{by \eqref{e:rel3}})\\
=&2\tilde h_{\xi_1-k}Q_{\xi_2}\cdots Q_{\xi_l}-2\tilde h_{\xi_2-k}Q_{\xi_1}Q_{\xi_3}\cdots Q_{\xi_l}+\cdots+2(-1)^{l-1}\tilde h_{\xi_l-k}Q_{\xi_1}\cdots Q_{\xi_{l-1}},
\end{align*}
which has proved the first one. The second relation is similarly shown by \eqref{e:rel2} and induction on $l(\mu)$ as well.
\end{proof}

\begin{exmp}
Let $\mu=(2,2)$ and $\xi=(3,1)$, then by Theorem \ref{t:iterative}
\begin{align*}
K^-_{\xi\mu}(t)&=\langle H_2H_2.1, Q_3Q_1.1 \rangle\\
&=\langle H_2.1, 2h_1Q_1.1 \rangle\\
&=2\langle t(1+t^{-1})H_1.1, Q_1.1 \rangle\\
&=4t+4.
\end{align*}
\end{exmp}

By Theorem \ref{t:iterative}, we now obtain an algebraic formula for $K^-_{\xi\mu}(t).$
\begin{thm}\label{t:Recurrence Formula}
For $\xi=(\xi_1,\cdots,\xi_{l})\in\mathcal{SP}_n$ and $\mu=(\mu_1,\cdots,\mu_m)\in\mathcal{P}_n$, $K^-_{\xi\mu}(t)$ is given by the iterative formula:
\begin{align}\label{e:Recurrence Formula}
&K^-_{\xi\mu}(t)\\ \nonumber
=&\sum\limits_{i=1}^{l}\sum\limits_{\tau\models\xi_i-\mu_1}\sum\limits_{\lambda\vdash n-\xi_i}(-1)^{i-1}2t^{\xi_i-\mu_1}(1+t^{-1})^{l(\tau)}B(\lambda,\mu^{[1]}-\tau)K^-_{\xi^{\hat{i}}\lambda}(t).
\end{align}
\end{thm}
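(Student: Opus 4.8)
The plan is to read the identity off directly from the matrix-coefficient expression
\[
K^-_{\xi\mu}(t)=\langle H_{\mu_1}H_{\mu_2}\cdots H_{\mu_m}.1,\ Q_{\xi_1}\cdots Q_{\xi_l}.1\rangle
\]
recorded after Definition~\ref{t:spin K-F}, by peeling operators off one slot at a time and feeding them into the two relations of Theorem~\ref{t:iterative}. First I would move the leftmost factor $H_{\mu_1}$ across the bilinear form using the adjoint property $\langle H_n u,v\rangle=\langle u,H^*_n v\rangle$, obtaining
\[
K^-_{\xi\mu}(t)=\langle H_{\mu^{[1]}}.1,\ H^*_{\mu_1}Q_\xi.1\rangle,
\]
where $H_{\mu^{[1]}}=H_{\mu_2}\cdots H_{\mu_m}$. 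Applying \eqref{e:iterative} with $k=\mu_1$ rewrites $H^*_{\mu_1}Q_\xi$ as the alternating sum $\sum_{i=1}^{l}(-1)^{i-1}2\,\tilde h_{\xi_i-\mu_1}Q_{\xi^{\hat{i}}}$, so that by linearity
\[
K^-_{\xi\mu}(t)=\sum_{i=1}^{l}(-1)^{i-1}2\,\langle H_{\mu^{[1]}}.1,\ \tilde h_{\xi_i-\mu_1}Q_{\xi^{\hat{i}}}.1\rangle.
\]

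Next I would push each operator $\tilde h_{\xi_i-\mu_1}$ back into the left slot through its adjoint $\tilde h^*_{\xi_i-\mu_1}$ and invoke the second relation \eqref{e:hH} of Theorem~\ref{t:iterative}, taken with $k=\xi_i-\mu_1$ and the composition $\mu^{[1]}$ in place of $\mu$, giving
\[
\tilde h^*_{\xi_i-\mu_1}H_{\mu^{[1]}}=\sum_{\tau\models\xi_i-\mu_1}t^{\xi_i-\mu_1-l(\tau)}(1+t)^{l(\tau)}H_{\mu^{[1]}-\tau}.
\]
A quick weight count shows $|\mu^{[1]}-\tau|=(n-\mu_1)-(\xi_i-\mu_1)=n-\xi_i$, so straightening the composition-indexed $H_{\mu^{[1]}-\tau}.1$ into the partition basis by Proposition~\ref{p:straight} produces exactly the index range $\lambda\vdash n-\xi_i$, namely $H_{\mu^{[1]}-\tau}.1=\sum_{\lambda\vdash n-\xi_i}B(\lambda,\mu^{[1]}-\tau)H_\lambda.1$. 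Recognising each surviving coefficient $\langle H_\lambda.1,\,Q_{\xi^{\hat{i}}}.1\rangle$ as the spin Kostka polynomial $K^-_{\xi^{\hat{i}}\lambda}(t)$ then assembles the triple sum of \eqref{e:Recurrence Formula}.

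It remains only to reconcile the weight: the procedure produces $t^{\xi_i-\mu_1-l(\tau)}(1+t)^{l(\tau)}$, which I would rewrite via $1+t=t(1+t^{-1})$ as $t^{\xi_i-\mu_1}(1+t^{-1})^{l(\tau)}$, matching the stated coefficient. Because every intermediate vector sits in a fixed graded component, all sums are finite and no convergence issue arises; terms with $\xi_i<\mu_1$ vanish automatically since $\tilde h_{\xi_i-\mu_1}=0$ for negative index, in agreement with the range $\tau\models\xi_i-\mu_1$ being empty. Since the genuine analytic content already resides in Theorem~\ref{t:iterative}, I expect the remaining difficulty to be purely organizational: confirming that the form-adjoint of $\tilde h_{\xi_i-\mu_1}$ is precisely the operator $\tilde h^*_{\xi_i-\mu_1}$ appearing in \eqref{e:hH}, and keeping the three summation indices $i$, $\tau$, and $\lambda$ correctly coupled to their ranges throughout the peeling.
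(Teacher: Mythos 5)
Your proposal is correct and takes essentially the same approach as the paper: its proof of Theorem~\ref{t:Recurrence Formula} is precisely the one-line observation that the formula follows from \eqref{e:iterative}, \eqref{e:hH} and \eqref{e:straight2}, and your peeling argument (adjointing $H_{\mu_1}$ across the form, applying \eqref{e:iterative}, adjointing $\tilde h_{\xi_i-\mu_1}$ back, applying \eqref{e:hH}, then straightening via Proposition~\ref{p:straight}) is exactly the computation being alluded to. Your coefficient reconciliation $t^{\xi_i-\mu_1-l(\tau)}(1+t)^{l(\tau)}=t^{\xi_i-\mu_1}(1+t^{-1})^{l(\tau)}$ and the vanishing of terms with $\xi_i<\mu_1$ are also the right bookkeeping, matching the worked examples following the theorem.
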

\begin{proof}
It follows readily from \eqref{e:iterative}, \eqref{e:hH} and \eqref{e:straight2}.
\end{proof}

Eq. \eqref{e:Recurrence Formula} shows that all spin Kostka polynomials are integral polynomials, and it also gives an effective recurrence of $K^-_{\xi\mu}(t)$ as shown by the
following example. 
\begin{exmp} Let $\xi=(4,3,1)$ and $\mu=(3,3,2)$, then
\begin{align*}
K^-_{\xi\mu}(t)&=\langle H_3H_3H_2.1, Q_4Q_3Q_1.1 \rangle\\
&=\langle H_3H_2.1, 2\tilde h_1Q_3Q_1.1 \rangle-\langle H_3H_2.1, 2\tilde h_0Q_4Q_1.1 \rangle\\
&=2\langle t(1+t^{-1})(H_2H_2.1+H_3H_1.1), Q_3Q_1.1 \rangle-2\langle H_3H_2.1, Q_4Q_1.1 \rangle\\
&=2(t+1)(K^-_{(3,1)(2,2)}(t)+K^-_{(3,1)(3,1)}(t))-2K^-_{(4,1)(3,2)}(t).
\end{align*}
\end{exmp}

The spin Kostka polynomials have quite a few remarkable properties resembling those of the Kostka-Foulkes polynomials. As a consequence of the recurrence we have the following.
\begin{cor}
Let $\xi$ be a strict partition and $\mu$ be a partition, we have\\
\indent $(1)$ If there exists $k\in\mathbb{N},$ such that $\xi_i=\mu_i, i=1,2,\cdots,k$, then
\begin{align}
K^-_{\xi\mu}(t)=2^kK^-_{\xi^{[k]}\mu^{[k]}}(t).
\end{align}
In particular, $K^-_{\xi\xi}(t)=2^{l(\xi)}.$\\
\indent $(2)$ $2^{l(\xi)}\mid K^-_{\xi\mu}(t).$\\
\indent $(3)$ $K^-_{\xi\mu}(-1)=2^{l(\xi)}\delta_{\xi\mu}.$
\end{cor}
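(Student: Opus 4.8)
The plan is to handle the three parts in turn, in each case starting from the matrix-coefficient expression $K^-_{\xi\mu}(t)=\langle H_{\mu_1}\cdots H_{\mu_m}.1,\,Q_{\xi_1}\cdots Q_{\xi_l}.1\rangle$ and invoking Theorem~\ref{t:iterative} or the recurrence~\eqref{e:Recurrence Formula}. For part~$(1)$ I would first settle the case $k=1$ and then iterate. Passing $H_{\mu_1}$ to the other side of the form via its adjoint gives $K^-_{\xi\mu}(t)=\langle H_{\mu_2}\cdots H_{\mu_m}.1,\,H^*_{\mu_1}Q_{\xi_1}\cdots Q_{\xi_l}.1\rangle$, and \eqref{e:iterative} expands $H^*_{\mu_1}Q_\xi=\sum_{i=1}^{l}(-1)^{i-1}2\,\tilde h_{\xi_i-\mu_1}Q_{\xi^{\hat i}}$. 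Since $\tilde h(z)$ involves only nonnegative powers of $z$, we have $\tilde h_n=0$ for $n<0$ and $\tilde h_0=1$; with $\mu_1=\xi_1$ and $\xi$ strict, every $i\ge 2$ gives $\xi_i-\mu_1<0$ and drops out, leaving only $i=1$. Hence $H^*_{\mu_1}Q_\xi.1=2\,Q_{\xi^{[1]}}.1$ and $K^-_{\xi\mu}(t)=2\,K^-_{\xi^{[1]}\mu^{[1]}}(t)$. The hypothesis $\xi_i=\mu_i$ ($i\le k$) descends to the tails $\xi^{[1]},\mu^{[1]}$, so induction on $k$ yields the claim; taking $\mu=\xi$, $k=l(\xi)$ reduces to $K^-_{\emptyset\emptyset}(t)=\langle 1,1\rangle=1$, whence $K^-_{\xi\xi}(t)=2^{l(\xi)}$.

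For part~$(2)$ I would induct on $l(\xi)$ using \eqref{e:Recurrence Formula}. Every summand there carries the explicit factor $2$ and a coefficient $t^{\xi_i-\mu_1}(1+t^{-1})^{l(\tau)}B(\lambda,\mu^{[1]}-\tau)$; this lies in $\mathbb{Z}[t]$, since $l(\tau)\le \xi_i-\mu_1$ makes $t^{\xi_i-\mu_1}(1+t^{-1})^{l(\tau)}=t^{\xi_i-\mu_1-l(\tau)}(1+t)^{l(\tau)}$ a genuine polynomial and $B(\lambda,\mu^{[1]}-\tau)\in\mathbb{Z}[t]$ by \eqref{e:straight}. By the inductive hypothesis $2^{l(\xi)-1}\mid K^-_{\xi^{\hat i}\lambda}(t)$, so each summand is divisible by $2\cdot 2^{l(\xi)-1}=2^{l(\xi)}$; summing preserves the divisibility, and the base case $l(\xi)=0$ is $K^-_{\emptyset\emptyset}=1$.

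For part~$(3)$ the cleanest route is to specialize the vertex operators at $t=-1$. Comparing \eqref{e:hallop} with \eqref{e:schurQop}, the even modes of the creation factor of $H(z)$ vanish at $t=-1$ while the annihilation factors already agree, so $H(z)|_{t=-1}=Q(z)$, i.e.\ $H_n|_{t=-1}=Q_n$. As $K^-_{\xi\mu}(t)$ is a polynomial by part~$(2)$, we may evaluate at $t=-1$ to get $K^-_{\xi\mu}(-1)=\langle Q_{\mu_1}\cdots Q_{\mu_m}.1,\,Q_\xi.1\rangle|_{t=-1}$, a pairing of elements of $\Gamma$. If $\mu$ is not strict it has a repeated positive part, so $Q_{\mu_1}\cdots Q_{\mu_m}.1=0$ because the operators $Q_n$ ($n>0$) anticommute and square to zero (Proposition~\ref{p:com}), while also $\delta_{\xi\mu}=0$ since $\xi$ is strict; if $\mu$ is strict, the orthogonality \eqref{e:orth} gives $2^{l(\mu)}\delta_{\mu\xi}=2^{l(\xi)}\delta_{\xi\mu}$. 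In both cases $K^-_{\xi\mu}(-1)=2^{l(\xi)}\delta_{\xi\mu}$.

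The step I expect to be most delicate is the specialization in part~$(3)$: one must check that evaluating the polynomial $K^-_{\xi\mu}(t)$ at $t=-1$ genuinely coincides with the pairing computed after substituting $H_n=Q_n$, i.e.\ that the poles of $z_\nu(t)$ at $t=-1$ coming from \eqref{e:form} never meet the support of $Q_\xi$, which is carried by partitions into odd parts only. A fully self-contained alternative sidesteps this by inducting on \eqref{e:Recurrence Formula} directly at $t=-1$: there $(1+t^{-1})^{l(\tau)}$ annihilates every $\tau\neq\emptyset$, one has $B(\lambda,\mu^{[1]})|_{t=-1}=\delta_{\lambda,\mu^{[1]}}$ because a weakly decreasing composition admits no admissible straightening move, and the strictness of $\xi$ forces the single surviving index to be $i=1$ with $\xi=\mu$, producing the Kronecker delta and the power $2^{l(\xi)}$.
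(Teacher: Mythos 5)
Your proposal is correct, and it supplies substantially more detail than the paper, whose entire proof is the single sentence that the corollary is an ``immediate consequence'' of Theorem~\ref{t:Recurrence Formula}. Your parts (1) and (2) are fleshed-out versions of exactly that intended route: in (1) you observe that in \eqref{e:iterative} the terms with $i\geq 2$ die because $\tilde h_n=0$ for $n<0$ while $\tilde h_0=1$, and in (2) you rewrite $t^{\xi_i-\mu_1}(1+t^{-1})^{l(\tau)}=t^{\xi_i-\mu_1-l(\tau)}(1+t)^{l(\tau)}\in\mathbb{Z}[t]$ (using $l(\tau)\leq\xi_i-\mu_1$) so that one factor of $2$ can be peeled off at each step of an induction on $l(\xi)$ --- precisely the bookkeeping the paper leaves implicit. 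Your main argument for (3), however, is genuinely different from iterating the recurrence at $t=-1$: you note that $H(z)|_{t=-1}=Q(z)$ (which is exactly the feature the authors engineered into their new operator $Q(z)$, whose annihilation half uses \emph{all} modes), so $K^-_{\xi\mu}(-1)$ becomes a pairing inside $\Gamma$, killed by the Clifford relation $Q_n^2=0$ when $\mu$ is not strict and evaluated by the orthogonality \eqref{e:orth} when it is. This buys a conceptual, induction-free proof of (3), and you correctly identify and resolve the one delicate point: the specialization of the form \eqref{e:form} at $t=-1$ is regular here because $Q_\xi.1$ is supported on odd-part power sums, where $z_\nu(-1)$ is finite. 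One small imprecision in your sketched \emph{alternative} for (3): an index $i_0>1$ with $\xi_{i_0}=\mu_1$ does survive the $(1+t^{-1})^{l(\tau)}$ filter; it is the inductive factor $\delta_{\xi^{\hat{i}_0},\,\mu^{[1]}}$ that kills it, since $\xi^{\hat{i}_0}$ still has largest part $\xi_1>\mu_1\geq\mu_2$. That is easily repaired and does not affect your primary argument.
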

\begin{proof}
They are immediate consequences of Theorem \ref{t:Recurrence Formula}.
\end{proof}

Some special cases of Theorem \ref{t:Recurrence Formula} are listed as follows. 
\begin{exmp}Suppose $\xi\in\mathcal{SP}_n,$ $\mu\in\mathcal{P}_n,$ we have
\begin{align}
K^-_{\xi(n)}(t)&=2\delta_{\xi,(n)}\\
K^-_{(n)\mu}(t)&=2t^{n-\mu_1}\sum\limits_{\tau\models n-\mu_1}(1+t^{-1})^{l(\tau)}B(\emptyset, \mu^{(1)}-\tau)\\
K^-_{\xi(\mu_1,\mu_2)}(t)&=
\begin{cases}
2^{2-\delta_{0,\xi_2}}t^{\xi_1-\mu_1}(1+t^{-1})& \text{if $\xi>(\mu_1,\mu_2)$}\\
4& \text{if $\xi=(\mu_1,\mu_2)$}\\
0& \text{if others.}
\end{cases}
\end{align}
\end{exmp}

There is a compact formula of $K^-_{(n)\mu}(t)$ \cite{W} by using a result of \cite{M}. We will
come back to the Wan-Wang formula using the iteration in the next section.

The following result was first proved in \cite{W} using the similar property of the Kostka-Foulkes polynomials. Using our iterative
formula, one can give an independent proof from that of the Kostka-Foulkes polynomials. We remark that the method can also be used to show
this property for the Kostka-Foulkes polynomial by the iterative formula in \cite{BJ}.
\begin{cor}
Let $\xi=(\xi_1,\xi_2,\cdots)\in\mathcal{SP}_n,$ $\mu=(\mu_1,\mu_2,\cdots)\in\mathcal{P}_n,$ then $K^-_{\xi\mu}(t)=0,$ unless $\xi\geq\mu.$
\end{cor}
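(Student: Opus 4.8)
The plan is to argue by induction on $n=|\xi|=|\mu|$, exploiting the recurrence \eqref{e:Recurrence Formula}; the base case $n\leq 1$ is immediate. For the inductive step I would prove the contrapositive. Assuming $K^-_{\xi\mu}(t)\neq0$, I extract from \eqref{e:Recurrence Formula} at least one surviving summand, indexed by some $i\in\{1,\dots,l\}$, a composition $\tau\models\xi_i-\mu_1$, and a partition $\lambda\vdash n-\xi_i$, for which the factors $t^{\xi_i-\mu_1}(1+t^{-1})^{l(\tau)}$, $B(\lambda,\mu^{[1]}-\tau)$ and $K^-_{\xi^{\hat i}\lambda}(t)$ are simultaneously nonzero. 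The aim is to deduce $\xi\geq\mu$ from the mere existence of such a summand.

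Each nonzero factor contributes one dominance-type inequality. First, the existence of a composition $\tau\models\xi_i-\mu_1$ forces $\xi_i\geq\mu_1$. Second, since $B(\lambda,\mu^{[1]}-\tau)\neq0$, the partial-sum property recorded after Proposition \ref{p:straight} gives $\sum_{j=1}^k\lambda_j\geq\sum_{j=1}^k(\mu_{j+1}-\tau_j)$ for all $k$. Third, $\xi^{\hat i}$ is again a strict partition with $|\xi^{\hat i}|=|\lambda|=n-\xi_i<n$, so the induction hypothesis applied to $K^-_{\xi^{\hat i}\lambda}(t)\neq0$ yields $\xi^{\hat i}\geq\lambda$. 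Chaining the last two inequalities and using that $\sum_{j=1}^k\tau_j\leq\xi_i-\mu_1$ (the partial sums of $\tau$ are bounded by its weight), I obtain for every $k\geq0$ the key estimate
\begin{align*}
\sum_{j=1}^k(\xi^{\hat i})_j+\xi_i\;\geq\;\sum_{j=1}^k\mu_{j+1}+\mu_1\;=\;\sum_{j=1}^{k+1}\mu_j.
\end{align*}

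It remains to convert the left-hand side into a genuine partial sum of $\xi$, and this reassembly is where strictness of $\xi$ is essential—the main point of the argument. I would split according to the position $i$ relative to $k$. If $i\leq k$, then deleting $\xi_i$ from $\{\xi_1,\dots,\xi_{k+1}\}$ leaves exactly the first $k$ parts of $\xi^{\hat i}$, so $\sum_{j=1}^k(\xi^{\hat i})_j+\xi_i=\sum_{j=1}^{k+1}\xi_j$ and the estimate reads $\sum_{j=1}^{k+1}\xi_j\geq\sum_{j=1}^{k+1}\mu_j$ directly. If instead $i\geq k+1$, then $\sum_{j=1}^k(\xi^{\hat i})_j=\sum_{j=1}^k\xi_j$, while the strict decrease of $\xi$ gives $\xi_i\leq\xi_{k+1}$; hence $\sum_{j=1}^k(\xi^{\hat i})_j+\xi_i\leq\sum_{j=1}^{k+1}\xi_j$, and combining with the estimate again yields $\sum_{j=1}^{k+1}\xi_j\geq\sum_{j=1}^{k+1}\mu_j$. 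The case $k=0$ in particular gives $\xi_1\geq\mu_1$, so letting $k$ range over all nonnegative integers establishes $\xi\geq\mu$ and closes the induction. The only genuine obstacle is the index bookkeeping in the second case: without strictness of $\xi$ the inequality $\xi_i\leq\xi_{k+1}$ could fail, and it is precisely this feature that distinguishes the spin setting; everything else is a direct reading of the vanishing conditions in \eqref{e:Recurrence Formula}.
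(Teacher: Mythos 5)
Your proof is correct and is essentially the paper's argument run in the opposite logical direction: the paper assumes $\xi\ngeq\mu$, takes the smallest violated partial sum, and uses the same three ingredients (nonexistence of $\tau\models\xi_i-\mu_1$ when $\xi_i<\mu_1$, the partial-sum property of $B$ from the remark after Proposition \ref{p:straight}, and induction on $n$ through Theorem \ref{t:Recurrence Formula}) to show every summand of \eqref{e:Recurrence Formula} vanishes, whereas you extract a nonzero summand and chain the same inequalities to recover dominance. One small correction to your closing remark: in the case $i\geq k+1$ the inequality $\xi_i\leq\xi_{k+1}$ requires only that $\xi$ be weakly decreasing, so strictness is not what rescues that step --- strictness of $\xi$ matters only because the recurrence \eqref{e:Recurrence Formula} is itself a statement about Schur $Q$-functions indexed by strict partitions.
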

\begin{proof}
It is equivalent to prove $K^-_{\xi\mu}(t)=0,$ if $\xi\ngeq\mu.$ We argue it by induction on $n.$ The initial step is obvious. Suppose it holds for $<n.$ There exists a smallest $k\geq1$, such that $\xi_1+\xi_2+\cdots+\xi_k<\mu_1+\mu_2+\cdots+\mu_k.$ \\
If $k=1$, then it's evident that $K^-_{\xi\mu}(t)=0$ by the iterative formula \eqref{e:Recurrence Formula}.\\
If $k>1$, then there exists $k>j\geq1,$ such that $\xi_{j+1}<\mu_1\leq\xi_j.$ We have
\begin{align*}
K^-_{\xi\mu}(t)&=\sum\limits_{i=1}^{j}(-1)^{i-1}\langle H_{\mu_2}H_{\mu_3}\cdots, 2\tilde h_{\xi_i-\mu_1}Q_{\xi_1}\cdots\hat{Q}_{\xi_i}\cdots \rangle\\
&=\sum\limits_{i=1}^{j}(-1)^{i-1}\sum\limits_{\tau\models\xi_i-\mu_1}2t^{\xi_i-\mu_1}(1+t^{-1})^{l(\tau)}\langle H_{\mu^{[1]}-\tau}, Q_{\xi^{\hat{i}}} \rangle\\
&=\sum\limits_{i=1}^{j}(-1)^{i-1}\sum\limits_{\tau\models\xi_i-\mu_1}2t^{\xi_i-\mu_1}(1+t^{-1})^{l(\tau)}\sum\limits_{\nu\vdash n-\xi_i}
B(\nu,\mu^{[1]}-\tau)\langle H_{\nu}, Q_{\xi^{\hat{i}}} \rangle\\
&=\sum\limits_{i=1}^{j}(-1)^{i-1}\sum\limits_{\tau\models\xi_i-\mu_1}2t^{\xi_i-\mu_1}(1+t^{-1})^{l(\tau)}\sum\limits_{\nu\vdash n-\xi_i}
B(\nu,\mu^{[1]}-\tau)K^-_{\xi^{\hat{i}}\nu}(t).
\end{align*}
By the remark below Proposition \ref{p:straight}, for each $1\leq i\leq j$, we have $\nu_1+\cdots+\nu_{k-1}\geq\mu_2+\cdots+\mu_k-\tau_1-\cdots-\tau_{k-1}\geq\mu_2+\cdots+\mu_k+\mu_1-\xi_i>\xi_1+\cdots+\xi_{i-1}+\xi_{i+1}+\cdots\xi_k.$
By induction, we have $K^-_{\xi\mu}(t)=0.$
\end{proof}

The Kostka-Foulkes polynomials have the stability property \cite{BJ}, which says that if $\mu_1\geq\lambda_2$, then $K_{\lambda+(r),\mu+(r)}(t)=K_{\lambda\mu}(t)$ for all $r\geq1.$ Here, $\lambda+(r)=(\lambda_1+r,\lambda_2,\cdots).$ The spin Kostka polynomials also enjoy the same stability.
\begin{prop}
Let $\xi=(\xi_1,\cdots,\xi_l)\in\mathcal{SP},$ $\mu=(\mu_1,\cdots,\mu_m)\in\mathcal{P},$ and $\mu_1>\xi_2$. Then for any $r\geq1,$ we have
\begin{align}
K^-_{\xi+(r)\mu+(r)}(t)=K^-_{\xi\mu}(t).
\end{align}
\end{prop}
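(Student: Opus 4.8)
The plan is to apply the iterative formula \eqref{e:Recurrence Formula} directly and observe that the hypothesis $\mu_1>\xi_2$ collapses the triple sum to a single term (the $i=1$ term), whose every ingredient is invariant under the simultaneous shift $\xi_1\mapsto\xi_1+r$, $\mu_1\mapsto\mu_1+r$. First I would note that the $i$-th summand of \eqref{e:Recurrence Formula} carries an inner sum $\sum_{\tau\models\xi_i-\mu_1}$, which is empty unless $\xi_i\geq\mu_1$. Since $\xi$ is strict, $\xi_i\leq\xi_2<\mu_1$ for every $i\geq2$, so all terms with $i\geq2$ vanish and
\begin{align*}
K^-_{\xi\mu}(t)=\sum_{\tau\models\xi_1-\mu_1}\sum_{\lambda\vdash n-\xi_1}2t^{\xi_1-\mu_1}(1+t^{-1})^{l(\tau)}B(\lambda,\mu^{[1]}-\tau)K^-_{\xi^{\hat{1}}\lambda}(t),
\end{align*}
with the convention that the right-hand side is $0$ when $\xi_1<\mu_1$.

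Next I would write the corresponding expansion for $K^-_{\xi+(r),\mu+(r)}(t)$. Setting $\xi'=\xi+(r)$ and $\mu'=\mu+(r)$, the first parts become $\xi'_1=\xi_1+r$ and $\mu'_1=\mu_1+r$, while $\xi'_i=\xi_i$ and $\mu'_j=\mu_j$ for $i,j\geq2$. The hypothesis gives $\mu'_1=\mu_1+r>\xi_2=\xi'_2\geq\xi'_i$ for $i\geq2$, so once more only the $i=1$ term contributes. The crux, which I would verify term by term, is that every ingredient of that term is unchanged by the shift: the exponent and the range of $\tau$ agree because $\xi'_1-\mu'_1=\xi_1-\mu_1$; the straightening input agrees because $(\mu')^{[1]}=\mu^{[1]}$; the remaining strict partition agrees because $(\xi')^{\hat{1}}=\xi^{\hat{1}}$; and the summation range for $\lambda$ agrees because $(n+r)-\xi'_1=n-\xi_1$. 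In particular the recursive factors satisfy $K^-_{(\xi')^{\hat{1}}\lambda}(t)=K^-_{\xi^{\hat{1}}\lambda}(t)$ identically, so no auxiliary induction on $n$ is needed; the two expansions coincide summand by summand, yielding $K^-_{\xi+(r),\mu+(r)}(t)=K^-_{\xi\mu}(t)$.

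The only point requiring genuine care, and the main obstacle, is confirming that the $i\geq2$ contributions really vanish: this is precisely where the \emph{strict} inequality $\mu_1>\xi_2$ is essential. With merely $\mu_1\geq\xi_2$ one could have $\xi_2=\mu_1$, so that the $i=2$ summand would survive through the boundary composition $\tau=\emptyset$ (with $t^{\,\xi_2-\mu_1}=1$), and under the shift this term would no longer match — breaking the equality. I would therefore isolate this comparison as the single substantive verification, the rest being the bookkeeping of identifying $\xi_1-\mu_1$, $\mu^{[1]}$, $\xi^{\hat{1}}$, and the $\lambda$-range across the two expansions.
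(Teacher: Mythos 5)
Your proof is correct and takes essentially the same route as the paper: the paper applies Theorem \ref{t:iterative} directly, so that both $K^-_{\xi+(r)\mu+(r)}(t)$ and $K^-_{\xi\mu}(t)$ collapse to the single matrix coefficient $\langle H_{\mu_2}\cdots H_{\mu_m}.1,\, 2\tilde h_{\xi_1-\mu_1}Q_{\xi_2}\cdots Q_{\xi_l}.1\rangle$, the terms with $i\geq 2$ dying for exactly the reason you identify ($\xi_i\leq\xi_2<\mu_1$ forces a negative index on $\tilde h$). Your argument runs the same mechanism one level down, through the recurrence \eqref{e:Recurrence Formula}, where that vanishing appears as an empty sum over compositions of a negative number, and the shift-invariance of the surviving $i=1$ term is the same observation in both.
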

\begin{proof}
By Theorem \ref{t:iterative}, it follows that
\begin{align*}
K^-_{\xi+(r)\mu+(r)}(t)=\langle H_{\mu_2}H_{\mu_3}\cdots H_{\mu_m}.1, 2\tilde h_{\xi_1-\mu_1}Q_{\xi_2}\cdots Q_{\xi_l}.1 \rangle=K^-_{\xi\mu}(t).
\end{align*}
\end{proof}

The spin Kostka-Foulkes polynomials $K_{\lambda\mu}(t)$ were conjecturally symmetric \cite[Qu. 4.10]{W} in the sense that
\begin{align*}
K^-_{\lambda\mu}(t)=t^{m_{\lambda\mu}}K^-_{\lambda\mu}(t^{-1})
\end{align*}
for some $m_{\lambda\mu}\in\mathbb{Z}$. 
However, the following is a counterexample.
\begin{exmp} \label{e:counter} Given $\xi=(3,2)$ and $\mu=(2,1^3),$ we have
\begin{align*}
K^-_{\xi\mu}(t)&=\langle H_2H_1H_1H_1.1, Q_3Q_2.1 \rangle\\
&=\langle H_1H_1H_1.1, 2\tilde h_1Q_2.1 \rangle-\langle H_1H_1H_1.1, 2\tilde h_0Q_3.1 \rangle\\
&=2\langle t(1+t^{-1})[3]H_1H_1.1, Q_2.1 \rangle-2K^-_{(3)(1^3)}(t)\\
&=4t(t^3+2t^2+3t+2).
\end{align*}
\end{exmp}

\section{Marked Tableaux}\label{S:MT}
To study projective representations of the symmetric group, Stembridge \cite{S} introduced the number $g_{\xi\lambda}$ as follows:
\begin{align}\label{e:bdefn}
Q_{\xi}(x)=\sum\limits_{\lambda}b_{\xi\lambda}s_{\lambda}(x), \quad g_{\xi\lambda}=2^{-l(\xi)}b_{\xi\lambda}.
\end{align}
Note that $b_{\xi\lambda}=K^-_{\xi\lambda}(0)$, but we will see that $g_{\xi\lambda}$ can be extended to any partition $\xi$, so we reserve this notation in this section.

Let $\xi,\lambda$ be partitions with $\xi$ strict. The coefficient $g_{\xi\lambda}$ of $s_{\lambda}$ in the expansion of the Schur Q-function
$2^{-l(\xi)}Q_{\xi}$ counts the number of (unshifted) marked tableaux $T$ of shape $\lambda$ and weight $\xi$ such that\\
\indent (a) $w(T)$ has the lattice property;\\
\indent (b) for each $k\geq1,$ the last occurrence of $k^{'}$ in $w(T)$ precedes the last occurrence of $k.$\\
Here $w(T)$ is the word of $T$ by reading the symbols in $T$ from right to left in successive rows, starting with the top row.

The combinatorial interpretation and the representation-theoretic interpretation of $g_{\xi\lambda}$ are known \cite{Sa, S, W, Wo}. However, no effective formula for $g_{\xi\mu}$ is available. As an application of the preceding section, we give an algebraic formula for $g_{\xi\lambda}$.

The ring $\Lambda_\mathbb{Q}$ of symmetric functions has the canonical
bilinear form $\langle\ , \ \rangle_0=\langle\ , \ \rangle_{t=0}$ under which Schur functions are orthonomal:
\begin{align}
\langle p_{\lambda}, p_{\mu}\rangle_0=\delta_{\lambda,\mu}z_{\lambda},
\end{align}
thus the adjoint operator of the multiplication operator $p_n$
is the differential operator $p_n^- =n\frac{\partial}{\partial p_n}$.

With respect to $\langle\ , \ \rangle_0$, the {\em vertex operators} and their adjoint operators for Schur's functions and Schur's $Q$-functions are given by \cite{Jing91, Jing00}:
\begin{align}
S^{\pm}(z)&=\mbox{exp} \left( \pm\sum\limits_{n\geq 1} \dfrac{1}{n}p_nz^{n} \right) \mbox{exp} \left( \mp\sum \limits_{n\geq 1} \frac{\partial}{\partial p_n}z^{-n} \right)\\ \notag
&=\sum_{n\in\mathbb Z}S^{\pm}_nz^{\pm n},\\
Q^{+}(z)&=Q(z)=\sum_{n\in\mathbb Z}Q^{+}_nz^{ n},\\
Q^-(z)&=\mbox{exp} \left(-\sum\limits_{n\geq 1} \dfrac{1}{n}p_nz^{n} \right) \mbox{exp} \left(\sum \limits_{n\geq 1, \text{odd}} 2\frac{\partial}{\partial p_n}z^{-n} \right)\\ \notag
&=\sum_{n\in\mathbb Z}Q^-_nz^{-n}.
\end{align}
Note that $Q^{-}(z)$ are the specialized vertex operator 
$Q^*(z)|_{t=0}$. Here we denote the adjoint operators by $S^+_n$ and $Q^+_n$ respectively, to distinguish from the
preceding section.

Therefore $g_{\xi\lambda}$ can be expressed in terms of this inner product:
\begin{align}
g_{\xi\lambda}=2^{-l(\xi)}b_{\xi\lambda}=2^{-l(\xi)}\langle s_{\lambda}, Q_{\xi} \rangle_0=2^{-l(\xi)}\langle S_{\lambda}.1, Q_{\xi}.1 \rangle_0.
\end{align}

Recall that the involution $\omega: \Lambda \rightarrow\Lambda$ defined by $\omega(p_{\lambda})=\varepsilon_{\lambda}p_{\lambda}$ \cite{M} is an isometry with respect to
the canonical inner product $\langle\ , \ \rangle_0$ such that
\begin{align*}
\omega(s_{\lambda})=s_{\lambda^{'}}, \quad \omega(Q_{\xi})=Q_{\xi}.
\end{align*}
\begin{prop} For given $\lambda\in\mathcal{P}_n,$ $\xi\in\mathcal{SP}_n,$ then $g_{\xi\lambda}$ or $ b_{\xi\lambda}$ has the following property
\begin{align}
g_{\xi\lambda}=g_{\xi\lambda^{'}}.
\end{align}
\end{prop}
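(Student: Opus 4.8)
The plan is to deduce the conjugation symmetry directly from the behaviour of the involution $\omega$ under the canonical inner product $\langle\ ,\ \rangle_0$, exploiting the two facts recorded immediately above the statement: that $\omega$ sends $s_\lambda$ to $s_{\lambda'}$ while fixing every Schur $Q$-function, and that $\omega$ is an isometry for $\langle\ ,\ \rangle_0$. Because $g_{\xi\lambda}$ has already been rewritten as a matrix coefficient $2^{-l(\xi)}\langle s_\lambda, Q_\xi\rangle_0$, the whole argument reduces to transporting this pairing through $\omega$.

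First I would express the conjugate coefficient in its inner-product form and apply $\omega(s_\lambda)=s_{\lambda'}$:
\[
g_{\xi\lambda'} = 2^{-l(\xi)}\langle s_{\lambda'}, Q_\xi\rangle_0 = 2^{-l(\xi)}\langle \omega(s_\lambda), Q_\xi\rangle_0 .
\]
Next, since $\omega(Q_\xi)=Q_\xi$, I may replace $Q_\xi$ by $\omega(Q_\xi)$ in the second slot at no cost and then invoke the isometry property of $\omega$ to remove both copies of $\omega$ at once:
\[
\langle \omega(s_\lambda), Q_\xi\rangle_0 = \langle \omega(s_\lambda), \omega(Q_\xi)\rangle_0 = \langle s_\lambda, Q_\xi\rangle_0 .
\]
Combining the two displays gives $g_{\xi\lambda'} = 2^{-l(\xi)}\langle s_\lambda, Q_\xi\rangle_0 = g_{\xi\lambda}$, which is the claim.

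Since every ingredient---the inner-product description of $g_{\xi\lambda}$, the action $\omega(s_\lambda)=s_{\lambda'}$, the invariance $\omega(Q_\xi)=Q_\xi$, and the isometry of $\omega$---is already in hand, there is no genuine obstacle in the argument. The only point requiring attention is to make sure one uses the isometry of $\omega$ with respect to the \emph{specialized} form $\langle\ ,\ \rangle_0=\langle\ ,\ \rangle_{t=0}$ and not the $t$-deformed form, which is precisely the property recalled in the text; the invariance $\omega(Q_\xi)=Q_\xi$ is what makes the odd-power-sum ring $\Gamma$ the natural home for this symmetry. Finally, multiplying through by $2^{l(\xi)}$ shows that the same computation yields $b_{\xi\lambda}=b_{\xi\lambda'}$ simultaneously, so the statement holds equally for $b_{\xi\lambda}$.
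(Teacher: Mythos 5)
Your proof is correct and is exactly the argument the paper intends: the paper states the proposition without a written proof precisely because it follows immediately from the facts recalled just beforehand, namely $g_{\xi\lambda}=2^{-l(\xi)}\langle s_\lambda, Q_\xi\rangle_0$, $\omega(s_\lambda)=s_{\lambda'}$, $\omega(Q_\xi)=Q_\xi$, and the isometry of $\omega$ with respect to $\langle\ ,\ \rangle_0$. Your chain $g_{\xi\lambda'}=2^{-l(\xi)}\langle\omega(s_\lambda),\omega(Q_\xi)\rangle_0=2^{-l(\xi)}\langle s_\lambda,Q_\xi\rangle_0=g_{\xi\lambda}$ fills in that implicit computation faithfully, including the remark that the same identity holds for $b_{\xi\lambda}$ after scaling by $2^{l(\xi)}$.
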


We introduce the operators for the elementary symmetric functions $e_n$: 
\begin{align}
e^{\pm}(z)=\mbox{exp} \left(\sum\limits_{n\geq 1} \dfrac{(-1)^{n+1}}{n}p^{\pm}_nz^{\pm n} \right)=\sum\limits_{n\geq0}e^{\pm}_nz^{\pm n}.
\end{align}
where $p^+_n=p_n$, $p^-_n=n\frac{\partial}{\partial p_n}$, and $e^+(z)=h(z)|_{t=0}$.

Then by Theorem \ref{t:iterative} we have
\begin{prop}\label{t:iterative2}
For any strict partition $\xi=(\xi_1,\xi_2,\ldots,\xi_l)$, any partition $\lambda=(\lambda_1,\lambda_2,\ldots)$ and integer $k$,
\begin{align}\label{e:iterative2}
S_{k}^{-}Q_{\xi}&=\sum\limits_{i=1}^{l} (-1)^{i-1}2e_{\xi_i-k}Q_{\xi_1}Q_{\xi_2}\cdots \hat{Q}_{\xi_i}\cdots Q_{\xi_l},\\\label{e:gS}
e^{-}_{k}S_{\lambda}&=\sum\limits_{\rho}S_{\rho},
\end{align}
where $\rho$ runs through the partitions such that $\lambda/\rho$ are vertical $k$-strips.
\end{prop}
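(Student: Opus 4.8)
The plan is to obtain both identities as the $t=0$ specializations of Theorem \ref{t:iterative}, using that the operators of this section are precisely the $t=0$ limits of those of Section \ref{S:HLop}. First I would record the dictionary of specializations. Comparing exponential factors gives $H(z)|_{t=0}=S^{+}(z)$ and $H^{*}(z)|_{t=0}=S^{-}(z)$; the operator $Q(z)$ carries no $t$; and $\tilde h(z)|_{t=0}=e^{+}(z)$. For the $t$-adjoints, $p_n^{*}=\frac{n}{1-t^n}\frac{\partial}{\partial p_n}$ is regular at $t=0$ with $p_n^{*}|_{t=0}=n\frac{\partial}{\partial p_n}=p_n^{-}$, whence $\tilde h^{*}_k|_{t=0}=e^{-}_k$. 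Reading off coefficients, I record $H^{*}_k|_{t=0}=S^{-}_k$, $\tilde h_n|_{t=0}=e_n$, and $H_\mu|_{t=0}=S_\mu$ (the Hall--Littlewood function degenerating to the Schur function).

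For \eqref{e:iterative2} I would simply specialize \eqref{e:iterative} at $t=0$. Every operator occurring there---$H^{*}_k$, the $\tilde h_{\xi_i-k}$, and the $Q_{\xi_j}$---is regular (indeed polynomial) in $t$, so the evaluation $t\mapsto0$ may be applied termwise. Under the dictionary the left side becomes $S^{-}_kQ_{\xi}$ and the right side becomes $\sum_{i=1}^{l}(-1)^{i-1}2e_{\xi_i-k}Q_{\xi_1}\cdots\hat{Q}_{\xi_i}\cdots Q_{\xi_l}$, which is exactly \eqref{e:iterative2}; no further work is needed.

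For \eqref{e:gS} I would specialize \eqref{e:hH} at $t=0$, renaming its partition $\mu$ to $\lambda$ to match the statement and writing $m=l(\lambda)$. The coefficient $t^{k-l(\tau)}(1+t)^{l(\tau)}$ vanishes at $t=0$ unless $l(\tau)=k$; since $l(\tau)\le k$ always and equality forces every nonzero part of $\tau$ to be $1$, the surviving $\tau$ are exactly those decrementing $k$ distinct rows of $\lambda$ by one, i.e. $\tau\in\{0,1\}^{m}$ with $|\tau|=k$. Thus the specialization reads $e^{-}_kS_{\lambda}=\sum_{\tau}S_{\lambda-\tau}.1$. It then remains to collapse this sum of operator words into a sum over partitions. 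Here I would use the $t=0$ case of $H_nH_{n+1}=tH_{n+1}H_n$, namely $S_nS_{n+1}=0$: because $\lambda$ is a partition and each $\tau_i\in\{0,1\}$, one has $(\lambda-\tau)_{i+1}\le(\lambda-\tau)_i+1$, so any non-partition composition $\lambda-\tau$ contains an adjacent pair $a,a+1$ and the word $\cdots S_aS_{a+1}\cdots.1$ is zero. Hence only $\rho:=\lambda-\tau$ with $\rho$ a partition survive, and $\lambda_i-\rho_i\in\{0,1\}$ with $\sum_i(\lambda_i-\rho_i)=k$ is precisely the condition that $\lambda/\rho$ is a vertical $k$-strip, giving \eqref{e:gS}.

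The first identity is thus immediate once the dictionary is in place, and the only genuine content is the collapse in the previous paragraph, which I expect to be the main (if mild) obstacle: one must verify that every non-partition word is annihilated rather than merely reordered with a sign. The gap bound $(\lambda-\tau)_{i+1}\le(\lambda-\tau)_i+1$ is what rules out any nontrivial straightening, so no delicate cancellation among surviving words occurs. As a sanity check one may instead derive \eqref{e:gS} directly as the adjoint of the Pieri rule $e_ks_{\rho}=\sum s_{\nu}$, summed over $\nu$ with $\nu/\rho$ a vertical $k$-strip, under the orthonormality of Schur functions for $\langle\ ,\ \rangle_0$; I would keep the specialization route as primary to stay within the vertex-operator framework.
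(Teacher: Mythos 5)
Your proposal is correct and matches the paper's route exactly: the paper derives Proposition \ref{t:iterative2} simply by specializing Theorem \ref{t:iterative} at $t=0$ (the dictionary $H^*(z)|_{t=0}=S^-(z)$, $\tilde h(z)|_{t=0}=e^+(z)$, $\tilde h^*_k|_{t=0}=e^-_k$), which is precisely your argument. Your additional step collapsing non-partition words via $S_nS_{n+1}=0$ (the $t=0$ case of \eqref{e:com5}) just fills in a detail the paper leaves implicit, and it is handled correctly.
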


The algebraic iterative formula for $b_{\xi\lambda}$ is then natural:
\begin{thm}\label{t:Recurrence Formula2} Let $\xi\in\mathcal{SP}_n,$ $\lambda\in\mathcal{P}_n,$
\begin{align}
b_{\xi\lambda}=\sum\limits_{i=1}^{l(\xi)}2(-1)^{i-1}\sum\limits_{\rho^i}b_{\xi^{(i)}\rho^{i}},
\end{align}
where $\rho^i$ runs through the partitions such that $\lambda^{[1]}/\rho^i$ are vertical $\xi_i-\lambda_1$-strips.
\end{thm}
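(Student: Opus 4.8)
The plan is to run the same peeling argument that produced Theorem~\ref{t:Recurrence Formula}, but now with respect to the canonical form $\langle\ ,\ \rangle_0$ and the operators $S^{\pm}$, $Q^{\pm}$, $e^{\pm}$. The result is the $t=0$ counterpart of Theorem~\ref{t:Recurrence Formula}, and it is cleanest to derive it directly from the specialized vertex operators of Proposition~\ref{t:iterative2} rather than by specializing the $t$-dependent recurrence. The two facts that drive everything are the adjointness relations under $\langle\ ,\ \rangle_0$: the creation operator $S_k$ (the component of $S^+(z)$) has adjoint $S^-_k$, and multiplication by $e_m$ has adjoint $e^-_m$. Both are immediate from the defining exponentials together with the fact that $p_n^-=n\,\partial/\partial p_n$ is the adjoint of multiplication by $p_n$; at the level of generating series one checks $(S^+(z))^*=S^-(z^{-1})$ and $(e^+(z))^*=e^-(z^{-1})$ and then reads off coefficients.

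With these in hand I would expand $b_{\xi\lambda}$ as a matrix coefficient and strip off the first part of $\lambda$. Writing $s_\lambda=S_{\lambda_1}S_{\lambda_2}\cdots S_{\lambda_m}.1$, the first adjointness gives
\begin{align*}
b_{\xi\lambda}=\langle S_{\lambda_1}S_{\lambda_2}\cdots S_{\lambda_m}.1,\ Q_\xi.1\rangle_0=\langle S_{\lambda_2}\cdots S_{\lambda_m}.1,\ S^-_{\lambda_1}Q_\xi.1\rangle_0.
\end{align*}
Now \eqref{e:iterative2} with $k=\lambda_1$ yields $S^-_{\lambda_1}Q_\xi=\sum_{i=1}^{l(\xi)}(-1)^{i-1}2\,e_{\xi_i-\lambda_1}Q_{\xi^{\hat{i}}}$, where any term with $\xi_i<\lambda_1$ drops out since $e_m=0$ for $m<0$. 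Substituting this in and moving each multiplication operator $e_{\xi_i-\lambda_1}$ to the left via the second adjointness produces
\begin{align*}
b_{\xi\lambda}=\sum_{i=1}^{l(\xi)}2(-1)^{i-1}\langle e^-_{\xi_i-\lambda_1}S_{\lambda_2}\cdots S_{\lambda_m}.1,\ Q_{\xi^{\hat{i}}}.1\rangle_0.
\end{align*}

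To finish I would apply \eqref{e:gS}. Since $S_{\lambda_2}\cdots S_{\lambda_m}.1=s_{\lambda^{[1]}}$, that identity gives $e^-_{\xi_i-\lambda_1}s_{\lambda^{[1]}}=\sum_{\rho^i}s_{\rho^i}$, the sum running over partitions $\rho^i$ for which $\lambda^{[1]}/\rho^i$ is a vertical $(\xi_i-\lambda_1)$-strip. Because the Schur functions are orthonormal under $\langle\ ,\ \rangle_0$ and $Q_{\xi^{\hat{i}}}=\sum_{\nu}b_{\xi^{\hat{i}}\nu}s_{\nu}$, each surviving coefficient $\langle s_{\rho^i},Q_{\xi^{\hat{i}}}.1\rangle_0$ equals $b_{\xi^{\hat{i}}\rho^i}$; a degree count $|\rho^i|=|\lambda^{[1]}|-(\xi_i-\lambda_1)=n-\xi_i=|\xi^{\hat{i}}|$ confirms these are bona fide $b$-coefficients. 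Collecting the terms, and identifying $\xi^{(i)}$ with $\xi^{\hat{i}}$, gives exactly the asserted formula.

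The computation is routine and I do not anticipate a genuine obstacle; the only care needed is bookkeeping—tracking the signs $(-1)^{i-1}$ and the indices $\xi^{\hat{i}}$, $\rho^i$ as parts are removed, and confirming that \eqref{e:iterative2} and \eqref{e:gS} are applied verbatim (they are precisely the $\langle\ ,\ \rangle_0$-analogues furnished by Proposition~\ref{t:iterative2}). The conceptual content has already been absorbed into Proposition~\ref{t:iterative2}, so this theorem is essentially its transcription into the language of marked tableaux.
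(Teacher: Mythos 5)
Your proof is correct and is essentially the argument the paper intends: the paper presents the theorem as an immediate (``natural'') consequence of Proposition~\ref{t:iterative2}, i.e., expressing $b_{\xi\lambda}=\langle S_{\lambda}.1,Q_{\xi}.1\rangle_0$, peeling off $S_{\lambda_1}$ by adjointness, applying \eqref{e:iterative2} and \eqref{e:gS}, and reading off coefficients against the orthonormal Schur basis. Your write-up merely makes explicit the adjointness facts $(S_k)^*=S^-_k$, $(e_m)^*=e^-_m$ and the degree bookkeeping that the paper leaves implicit.
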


\begin{exmp} Let $\lambda\in\mathcal{P}_n,$ we have
\begin{align}\label{e:b(n)}
b_{(n)\lambda}=
\begin{cases}
2& \text{if $\lambda$ is a hook};\\
0& \text{if $\lambda$ is not a hook.}
\end{cases}
\end{align}
\end{exmp}

Combining \eqref{e:spin K-F} and \eqref{e:bdefn}, we have
\begin{align}
K^-_{\xi\mu}(t)=\sum\limits_{\lambda}b_{\xi\lambda}K_{\lambda\mu}(t),
\end{align}
where $K_{\lambda\mu}(t)$ are the Kostka-Foulkes polynomials.\\

By \eqref{e:b(n)}, we have
\begin{align}
K^-_{(n)\mu}(t)=\sum\limits_{\lambda \text{ hook}}2K_{\lambda\mu}(t).
\end{align}

Recall that a compact formula for the Kostka-Foulkes polynomials $K_{\lambda\mu}(t)$ is known for $\lambda$ being hook-shaped \cite{K, BJ}
\begin{align}
K_{(n-k,1^k)\mu}(t)=t^{n(\mu)+\frac{k(k+1-2l)}{2}}\left[\begin{matrix}l-1\\k\end{matrix}\right],
\end{align}
where $n=|\mu|, l=l(\mu).$ Therefore, we have that for any partition $\mu\vdash n$
\begin{align}\label{e:K(n)}
K^-_{(n)\mu}(t)&=\sum\limits_{k=0}^{l(\mu)-1}2t^{n(\mu)+\frac{k(k+1-2l(\mu))}{2}}\left[\begin{matrix}l(\mu)-1\\k\end{matrix}\right]\\ \label{e:K(n)1}
&=t^{n(\mu)}\prod\limits_{i=1}^{l(\mu)}(1+t^{1-i}).
\end{align}
Here the second equation follows from the $t$-binomial expansion \cite[(2.9)]{An} or an easy induction on $l(\mu)$ from \eqref{e:K(n)}. We remark that \eqref{e:K(n)1}
was first given by Wan-Wang \cite{W} using identities of Hall-Littlewood polynomials.


For given partition $\lambda,$ we define
\begin{align*}
\{\lambda\}_s\doteq\{\rho\subset\lambda^{[1]}\mid \rho~~ \text {is a hook and} ~~\lambda^{[1]}/\rho~~ \text {is a vertical} ~~s\text{-strips}\}.
\end{align*}
Set $N^{(s)}(\lambda)=\text{Card}\{\lambda\}_s.$ It is clear that $N^{(s)}(\lambda)=0$ when $s<0$ or $s>|\lambda^{[1]}|.$ Now we can give a two-row formula by the iterative formula for $b_{\xi\lambda}.$
\begin{thm}\label{t:tworow}
Let $1\leq m<\frac{n}{2},$ $\lambda\in\mathcal{P}_n,$ we have
\begin{align}\label{e:tworow}
b_{(n-m,m)\lambda}=4(N^{(n-m-\lambda_1)}(\lambda)-N^{(m-\lambda_1)}(\lambda)).
\end{align}
\end{thm}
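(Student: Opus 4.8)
The strategy is to apply the iterative formula for $b_{\xi\lambda}$ from Theorem \ref{t:Recurrence Formula2} to the two-row strict partition $\xi = (n-m, m)$, and then reorganize the resulting double sum into the difference of two hook-counting numbers $N^{(s)}(\lambda)$. Since $\xi$ has length $l(\xi) = 2$ (using $1 \le m < n/2$, so $n-m > m > 0$), the formula gives
\begin{align*}
b_{(n-m,m)\lambda} = 2\sum_{\rho^1} b_{(m)\rho^1} - 2\sum_{\rho^2} b_{(n-m)\rho^2},
\end{align*}
where $\rho^1$ runs over partitions with $\lambda^{[1]}/\rho^1$ a vertical $(n-m-\lambda_1)$-strip, and $\rho^2$ over partitions with $\lambda^{[1]}/\rho^2$ a vertical $(m-\lambda_1)$-strip. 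Here $\xi^{(1)} = (m)$ and $\xi^{(2)} = (n-m)$ are the one-row partitions obtained by deleting each part of $\xi$ in turn.

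**Main computation.** The key observation is that each inner coefficient $b_{(N)\rho}$ for a single-row index is governed by the explicit hook formula \eqref{e:b(n)}: it equals $2$ when $\rho$ is a hook and $0$ otherwise. Substituting this into both sums collapses them to counts of hooks. First I would note that for $\rho^1$ to contribute we need $|\rho^1| = n - \xi^{(1)} = m$, which matches $|\lambda^{[1]}| - (n-m-\lambda_1)$, so the vertical-strip condition is consistent; similarly for $\rho^2$. Then the first sum becomes $2 \cdot 2 \cdot (\text{number of hooks } \rho^1 \subset \lambda^{[1]}$ with $\lambda^{[1]}/\rho^1$ a vertical $(n-m-\lambda_1)$-strip$)$, which is exactly $4\,N^{(n-m-\lambda_1)}(\lambda)$ by the definition of $\{\lambda\}_s$ and $N^{(s)}(\lambda)$. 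The second sum analogously yields $4\,N^{(m-\lambda_1)}(\lambda)$. Combining with the signs $(-1)^{i-1}$ (which are $+$ for $i=1$ and $-$ for $i=2$) gives the stated formula \eqref{e:tworow}.

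**Anticipated obstacle.** The routine algebra is straightforward; the point requiring care is verifying that the index ranges align correctly, namely that the superscripts $s = n-m-\lambda_1$ and $s = m-\lambda_1$ in $N^{(s)}(\lambda)$ are precisely the strip-sizes $\xi_i - \lambda_1$ appearing in Theorem \ref{t:Recurrence Formula2}, and that the weight bookkeeping $|\rho^i| = |\lambda^{[1]}| - (\xi_i - \lambda_1)$ is consistent so that no spurious or missing terms arise. One should also confirm the boundary behavior: when $n - m - \lambda_1 < 0$ or $m - \lambda_1 < 0$ the corresponding $N^{(s)}(\lambda)$ vanishes by the convention stated after its definition, matching the fact that the vertical-strip condition is then vacuous. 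The hypothesis $m < n/2$ guarantees $\xi$ is genuinely strict of length $2$ so that the two terms are distinct and the factor $2^{-l(\xi)}$ normalization underlying \eqref{e:b(n)} is applied consistently at each recursive stage.
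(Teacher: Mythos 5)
Your proposal is correct and is essentially the argument the paper intends: Theorem \ref{t:tworow} is stated as a direct specialization of the iterative formula of Theorem \ref{t:Recurrence Formula2} to $\xi=(n-m,m)$, with the inner coefficients collapsed via the hook formula \eqref{e:b(n)} into the counts $N^{(n-m-\lambda_1)}(\lambda)$ and $N^{(m-\lambda_1)}(\lambda)$, exactly as you do (including the sign bookkeeping and the weight consistency $|\rho^i|=|\lambda^{[1]}|-(\xi_i-\lambda_1)$). The only blemish is cosmetic: your aside about ``the factor $2^{-l(\xi)}$ normalization underlying \eqref{e:b(n)}'' conflates $b_{\xi\lambda}$ with $g_{\xi\lambda}$, but since you work with $b$ throughout, this does not affect the proof.
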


To compute $N^{(s)}(\lambda),$ we denote all hook (resp. double hook) partitions of $n$ by $HP(n)$ (resp. $DHP(n)$). That is $HP(n)\doteq\{(\lambda_1,1^{m_1})\mid \lambda_1+m_1=n\},$ $DHP(n)\doteq\{(\lambda_1,\lambda_2,2^{m_2},1^{m_1})\mid \lambda_1+\lambda_2+2m_2+m_1=n\}.$ Clearly, $HP(n)\subset DHP(n).$ We remark that $N^{(s)}(\lambda)=0$ unless $\lambda\in DHP(n).$ Now let's consider $N^{(s)}(\lambda)~~(0\leq s\leq|\lambda^{[1]}|,~\lambda\in DHP(n))$ case by case.

{\bf Case 1:} If $\lambda\in HP(n),$ then $N^{(s)}(\lambda)=1$.\\
Before considering the case $\lambda\in DHP(n)\backslash HP(n),$ we look at the following special case.

{\bf Case 2:} If $\lambda=(\lambda_1,\lambda_2,1^{m_1})$ and $\lambda\notin HP(n),$ then we have
\begin{align}
N^{(s)}(\lambda)=
\begin{cases}
0& \text{if $s\geq m_1+2$}\\
1&\text{if $s=0$ or $s=m_1+1$}\\
2&\text{if $1\leq s\leq m_1$}.
\end{cases}
\end{align}

{\bf Case 3:} If $\lambda=(\lambda_1,\lambda_2,2^{m_2},1^{m_1})\in DHP(n)\backslash HP(n)$. It follows from {\bf Case 2} that
\begin{align}
\begin{split}
N^{(s)}(\lambda)&=N^{(s-m_2)}((\lambda_1,\lambda_2,1^{m_1}))\\&=
\begin{cases}
0& \text{if $0\leq s\leq m_2-1$ or $s\geq m_1+m_2+2$}\\
1&\text{if $s=m_2$ or $s=m_1+m_2+1$}\\
2&\text{if $1+m_2\leq s\leq m_1+m_2$}.
\end{cases}
\end{split}
\end{align}

\begin{exmp}
For given $\xi=(4,3),$ $\lambda=(2,2,2,1),$ then $\lambda_1=\lambda_2=2,$ $m_1=m_2=1,$ we have
\begin{align*}
b_{(4,3)(2,2,2,1)}=4(N^{(2)}(\lambda)-N^{(1)}(\lambda))=4\times(2-1)=4.
\end{align*}
\end{exmp}

The symmetric group $\mathfrak{S}_n$ has a two-valued representation, known as the spin representation studied by Schur and this is actually a representation of the double covering group
$\widetilde{\mathfrak{S}}_n$ of $\mathfrak S_n$ \cite{Sc}.  It is known that the irreducible spin representations of $\mathfrak S_n$ are parametrized by strict partitions of $n$.
Let $\zeta^{\lambda}$ be the irreducible spin character of the Schur double covering group $\tilde{\mathfrak{S}}_n$ afforded by the module $V^{\lambda}$, $\lambda\in\mathcal{SP}_n$. Stembridge \cite{S} obtained the irreducible decomposition for the twisted tensor product of $\widetilde{\mathfrak{S}}_n$ \cite{Kl}:
\begin{align*}
\text{\bf ch}(\zeta^{(n)}\otimes \zeta^{\lambda})=P_{\lambda}(x;-1)
\end{align*}
where {\bf ch} is the characteristic map (cf. \cite{Jing91}).
\begin{cor}Let $S^{\lambda}$ be the Specht module corresponding to partition $\lambda\vdash n$ and $1\leq m<\frac{n}{2}.$ Then we have the irreducible decomposition as $\mathfrak{S}_n$-modules:
\begin{align}\label{e:decom}
V^{(n)}\otimes V^{(n-m,m)}\simeq \bigoplus_{\lambda\in DHP(n)}(N^{(n-m-\lambda_1)}(\lambda)-N^{(m-\lambda_1)}(\lambda))S^{\lambda}.
\end{align}
\end{cor}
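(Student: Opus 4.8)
The plan is to identify the multiplicity of each Specht module $S^\lambda$ with the coefficient $g_{(n-m,m)\lambda}$ and then invoke the two-row formula of Theorem~\ref{t:tworow}. Since both $(n)$ and $(n-m,m)$ are strict (the latter because $m<n/2$), the modules $V^{(n)}$ and $V^{(n-m,m)}$ are genuine spin modules; as the product of their defining cocycles is trivial, their tensor product is an honest linear $\mathfrak S_n$-module and hence decomposes into Specht modules. Applying the ordinary characteristic map together with Stembridge's identity $\text{\bf ch}(\zeta^{(n)}\otimes\zeta^\lambda)=P_\lambda(x;-1)$ at $\lambda=(n-m,m)$, and using that $\text{\bf ch}$ is an isometry sending $\chi^\lambda$ to $s_\lambda$, the multiplicity of $S^\lambda$ equals $\langle P_{(n-m,m)}(x;-1),s_\lambda\rangle_0$.

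The key bridge is the specialization $P_\xi(x;-1)=2^{-l(\xi)}Q_\xi(x)$ for strict $\xi$. I would obtain this by evaluating the defining relation \eqref{e:spin K-F} at $t=-1$: because $K^-_{\xi\mu}(-1)=2^{l(\xi)}\delta_{\xi\mu}$ (part (3) of the Corollary following Theorem~\ref{t:Recurrence Formula}), the right-hand side collapses to $2^{l(\xi)}P_\xi(x;-1)$, while the left-hand side $Q_\xi(x)$ is independent of $t$. Note that $P_\xi(x;-1)$ is well defined precisely because $\xi$ is strict, so the normalizing factor equals $1$ and no pole at $t=-1$ arises. Combining this with the definition \eqref{e:bdefn} of $b_{\xi\lambda}$ and $g_{\xi\lambda}$ gives
\begin{align*}
P_{(n-m,m)}(x;-1)=2^{-l(\xi)}Q_\xi(x)=\sum_\lambda g_{\xi\lambda}\,s_\lambda(x),\qquad \xi=(n-m,m),
\end{align*}
so that the multiplicity of $S^\lambda$ in $V^{(n)}\otimes V^{(n-m,m)}$ is exactly $g_{(n-m,m)\lambda}$.

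Finally I would apply Theorem~\ref{t:tworow} together with $g_{\xi\lambda}=2^{-l(\xi)}b_{\xi\lambda}$ and $l(\xi)=2$ to obtain
\begin{align*}
g_{(n-m,m)\lambda}=\tfrac14\,b_{(n-m,m)\lambda}=N^{(n-m-\lambda_1)}(\lambda)-N^{(m-\lambda_1)}(\lambda),
\end{align*}
and since $N^{(s)}(\lambda)=0$ unless $\lambda\in DHP(n)$, the summation over $\lambda$ may be restricted to $DHP(n)$, which yields \eqref{e:decom}.

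The main obstacle is conceptual rather than computational: one must match the normalization in Stembridge's characteristic formula with the $Q$-versus-$P$ scaling at $t=-1$, and confirm that the \emph{ordinary} (not spin) characteristic map is the correct instrument, precisely because the tensor product of two spin modules is linear. Once the bridge identity $P_\xi(x;-1)=2^{-l(\xi)}Q_\xi$ is secured, the statement reduces to a direct substitution into Theorem~\ref{t:tworow}.
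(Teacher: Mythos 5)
Your proof is correct and is essentially the paper's own (implicit) argument --- the corollary is stated there without proof as the direct combination of Stembridge's identity $\text{\bf ch}(\zeta^{(n)}\otimes\zeta^{\lambda})=P_{\lambda}(x;-1)$, the identification of the multiplicity of $S^{\lambda}$ in $V^{(n)}\otimes V^{(n-m,m)}$ with $g_{(n-m,m)\lambda}$, Theorem \ref{t:tworow}, and the remark that $N^{(s)}(\lambda)=0$ unless $\lambda\in DHP(n)$. Your derivation of the bridge identity $P_{\xi}(x;-1)=2^{-l(\xi)}Q_{\xi}(x)$ from $K^-_{\xi\mu}(-1)=2^{l(\xi)}\delta_{\xi\mu}$ is a legitimate self-contained substitute for the standard specialization $Q_{\xi}=2^{l(\xi)}P_{\xi}(x;-1)$; the only slight inaccuracy is your parenthetical claim that well-definedness at $t=-1$ holds ``precisely because $\xi$ is strict'' --- in fact the collapse of the sum $\sum_{\mu}K^-_{\xi\mu}(t)P_{\mu}(x;t)$ at $t=-1$ uses that $P_{\mu}(x;-1)$ is well defined for \emph{all} partitions $\mu$ (as the paper itself remarks), not only for strict ones.
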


Aokage \cite{A2} obtained the explicit irreducible decomposition of $(V^{(n)})^{\otimes 2}$ when $n$ is even, so \eqref{e:decom} offers the formula for a general tensor product. Recall that the symmetric functions $P_{\mu}(x;-1)$ are well-defined for all partitions $\mu$, so $g_{\mu\lambda}$ are defined similarly as \eqref{e:bdefn} for any partitions $\lambda, \mu$:
\begin{align}\label{e:bdefn2}
P_{\mu}(x; -1)=\sum\limits_{\lambda}g_{\mu\lambda}s_{\lambda}(x).
\end{align}

 Note that the following identities between the Schur $P$-functions and the Schur functions hold by using
 the tensor product of the spin representations of the symmetric group \cite{A}:
\begin{align}
\begin{split}\label{e:identities}
\sum_{\lambda\in HP(n)\backslash HOP(n)}s_{\lambda}(x)&=\sum_{l(\mu)\leq 2}(-1)^{\mu_2}P_{\mu}(x;-1)\\
\sum_{\lambda\in HOP(n)}s_{\lambda}(x)&=\sum_{l(\mu)= 2}(-1)^{\mu_2+1}P_{\mu}(x;-1)
\end{split}
\end{align}
where $HOP(n)\doteq\{\lambda\in HP(n)\mid \lambda_1~ \text{is odd}\}$ and $n=2r$ is even.

Aokage has conjectured the following result at the end of his paper \cite{A}.
\begin{thm}\label{t:conj} For $\lambda=(n-j,1^j)\in HP(n),$
\begin{align}
g_{(r^2)\lambda}=
\begin{cases}
0& \text{if $j<r$}\\
(-1)^{r+j}&\text{if $j\geq r$}.
\end{cases}
\end{align}
\end{thm}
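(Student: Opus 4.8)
The plan is to exploit the fact that $(r^2)=(r,r)$ is the unique \emph{non-strict} two-row partition of $n=2r$, hence the only two-row case \emph{excluded} from the range $1\le m<n/2$ of Theorem~\ref{t:tworow}; the identities \eqref{e:identities} will supply exactly the one linear relation needed to pin it down in terms of the strict two-row coefficients, which are computable. Concretely, I fix a hook $\lambda=(2r-j,1^{j})$ with $0\le j\le 2r-1$ (so $\lambda_1=2r-j$) and extract the coefficient of $s_{\lambda}$ from the first identity in \eqref{e:identities}. Since the partitions $\mu$ with $l(\mu)\le 2$ and $|\mu|=2r$ are exactly $(2r)$ and $(2r-c,c)$ for $1\le c\le r$, and the left-hand side contributes the indicator $[\,\lambda_1\text{ even}\,]=[\,j\text{ even}\,]$, this reads
\begin{align*}
[\,j\text{ even}\,]=g_{(2r)\lambda}+\sum_{c=1}^{r}(-1)^{c}g_{(2r-c,c)\lambda}.
\end{align*}

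Next I evaluate every term except the unknown. By \eqref{e:b(n)} and the consistency $g_{\mu\lambda}=2^{-l(\mu)}b_{\mu\lambda}$ for strict $\mu$ (equivalently $P_{\mu}(x;-1)=2^{-l(\mu)}Q_{\mu}$, which legitimizes feeding $b$-values into the $g$-identity) one has $g_{(2r)\lambda}=1$. For $1\le c\le r-1$ the partition $(2r-c,c)$ is strict with $c<r=n/2$, so Theorem~\ref{t:tworow} together with $g=2^{-2}b$ gives
\begin{align*}
g_{(2r-c,c)\lambda}=N^{(j-c)}(\lambda)-N^{(c+j-2r)}(\lambda),
\end{align*}
using $\lambda_1=2r-j$; the remaining term is the target $(-1)^{r}g_{(r,r)\lambda}$. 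Solving for it, and inserting the hook evaluation $N^{(s)}(\lambda)=1$ for $0\le s\le j$ and $N^{(s)}(\lambda)=0$ otherwise (Case~1), reduces everything to two elementary alternating sums $\sum_{c}(-1)^{c}$ over the explicit ranges cut out by the inequalities $1\le c\le j$ and $2r-j\le c\le r-1$.

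Finally I split into the two regimes of the statement. When $j<r$ the second $N$-term never fires and the first sum vanishes or equals $-1$ according to the parity of $j$; matched against the indicator $[\,j\text{ even}\,]$ one finds $(-1)^{r}g_{(r,r)\lambda}=0$, i.e.\ $g_{(r,r)\lambda}=0$. When $j\ge r$ the difference of the two $N$-sums telescopes to $\sum_{c=1}^{2r-j-1}(-1)^{c}$, whose value is governed by the parity of $\lambda_1=2r-j$; combining with the indicator yields $(-1)^{r}g_{(r,r)\lambda}=(-1)^{j}$, that is $g_{(r,r)\lambda}=(-1)^{r+j}$, as claimed. The only genuinely substantive point is the first one: recognizing that \eqref{e:identities} is precisely the device converting the inaccessible diagonal coefficient $g_{(r,r)\lambda}$ into a signed telescoping sum of the accessible strict coefficients. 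After that the argument is routine parity bookkeeping, and I expect no further obstacle.
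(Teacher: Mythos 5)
Your proposal is correct and takes essentially the same route as the paper: both arguments use Aokage's identities \eqref{e:identities} to pin down the non-strict coefficient $g_{(r^2)\lambda}$ as a signed combination of the strict two-row coefficients, evaluate those via Theorem \ref{t:tworow} together with the hook value $N^{(s)}(\lambda)=1$ for $0\le s\le j$ (and $0$ otherwise), and finish with the same alternating-sum parity analysis. The only cosmetic difference is that the paper first combines the two identities into $P_{n}(x;-1)+2\sum_{i=1}^{r}(-1)^iP_{(n-i,i)}(x;-1)=\sum_{j=0}^{n}(-1)^js_{(n-j,1^j)}(x)$ before extracting the coefficient of $s_\lambda$, whereas you extract it from the first identity alone; both choices yield the same telescoping sums and the same conclusion.
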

As an application of our two-row formula for $b_{\xi\lambda},$ we will present a proof of Aokage's conjecture.

Combining with above two identities in \eqref{e:identities}, we have
\begin{align*}
P_{n}(x;-1)+2\sum_{i\geq1}^{r}(-1)^iP_{(n-i,i)}(x;-1)=\sum_{j=0}^{n}(-1)^js_{(n-j,1^j)}(x).
\end{align*}
Thus,
\begin{align*}
P_{(r^2)}(x;-1)=\frac{1}{4}\sum_{i\geq0}^{r-1}(-1)^{i+r+1}Q_{(n-i,i)}(x;-1)+\frac{1}{2}\sum_{j=0}^{n}(-1)^{r+j}s_{(n-j,1^j)}(x).
\end{align*}
By the orthonormality of $s_{\lambda}$,
\begin{align*}
g_{(r^2)\lambda}=\frac{1}{4}\sum_{i\geq0}^{r-1}(-1)^{i+r+1}b_{(n-i,i)\lambda}+\frac{1}{2}(-1)^{r+j}\delta_{(n-j,1^j)\lambda}.
\end{align*}
It follows from the remark below Theorem \ref{t:tworow}, we have $g_{(r^2)\lambda}=0$ unless $\lambda\in DHP(n)$. Now let's show Thm. \ref{t:conj}.

\begin{proof}
Let $\lambda=(n-j,1^j)\in HP(n),$ we have
\begin{align*}
g_{(r^2)\lambda}&=\frac{1}{2}(-1)^{r+1}+\sum_{i=1}^{r-1}(-1)^{i+r+1}(N^{(j-i)}(\lambda)-N^{(i+j-n)}(\lambda))+\frac{1}{2}(-1)^{r+j}\\
&=\frac{1}{2}(-1)^{r+1}+(-1)^{r+1}(\sum_{i=1}^{min\{r-1,j\}}(-1)^i-\sum_{i=n-j}^{r-1}(-1)^i)+\frac{1}{2}(-1)^{r+j}.
\end{align*}
Then the result follows immediately by a careful analysis of $j$ and direct computation. \end{proof}

We remark that there exists a quadratic expression of the $P$-function in terms of Schur functions \cite{LLT}.
Explicit and direct linear expansion  \eqref{e:bdefn2} in general is thus needed. Indeed, we
can give a compact formula of $g_{(r^2)\lambda}$ for any partition $\lambda$.
\begin{thm}
For $\lambda=(\lambda_1,\lambda_2,2^{m_2},1^{m_1})\in DHP(n)\backslash HP(n)$, we have that
\begin{align}
g_{(r^2)\lambda}=\sum_{i=1}^{r-1}(-1)^{i+r+1}(N^{(n-i-\lambda_1)}(\lambda)-N^{(i-\lambda_1)}(\lambda)).
\end{align}

By considering $\lambda$ case by case, we have that
\begin{align*}
g_{(r^2)\lambda}=
\begin{cases}
1& \text{if $\lambda_2+m_1-1\leq \lambda_1\leq \lambda_2+m_1+1$,}\\
0& \text{otherwise}.
\end{cases}
\end{align*}
\end{thm}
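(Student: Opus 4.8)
The plan is to reduce the asserted summation formula to the expansion of $P_{(r^2)}(x;-1)$ that is already displayed above, and then to evaluate the resulting alternating sum by exploiting the trapezoidal shape of $N^{(s)}(\lambda)$ recorded in \textbf{Case 3}. For the first identity I would extract the coefficient of $s_\lambda$ from the displayed formula
\[
P_{(r^2)}(x;-1)=\tfrac14\sum_{i=0}^{r-1}(-1)^{i+r+1}Q_{(n-i,i)}(x;-1)+\tfrac12\sum_{j=0}^{n}(-1)^{r+j}s_{(n-j,1^j)}(x),
\]
using the orthonormality of the $s_\lambda$. Since $\lambda\in DHP(n)\setminus HP(n)$ is \emph{not} a hook, the second (hook-Schur) sum contributes nothing, and the $i=0$ term $b_{(n)\lambda}$ vanishes by \eqref{e:b(n)}. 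For $1\le i\le r-1$ one has $i<r=n/2$, so the two-row formula \eqref{e:tworow}, i.e. $b_{(n-i,i)\lambda}=4\bigl(N^{(n-i-\lambda_1)}(\lambda)-N^{(i-\lambda_1)}(\lambda)\bigr)$, applies to each surviving term; the factor $\tfrac14\cdot 4$ collapses to $1$ and produces exactly the claimed sum. This is the routine half.

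For the closed form I would write $g_{(r^2)\lambda}=(-1)^{r+1}(A-B)$ with $A=\sum_{i=1}^{r-1}(-1)^iN^{(n-i-\lambda_1)}(\lambda)$ and $B=\sum_{i=1}^{r-1}(-1)^iN^{(i-\lambda_1)}(\lambda)$. The first step is to fold $A$ onto the same sequence as $B$ via $i\mapsto n-i=2r-i$: since $n=2r$ is even, $(-1)^i$ is unchanged and $N^{(n-i-\lambda_1)}$ turns into $N^{(i-\lambda_1)}$, giving $A=\sum_{i=r+1}^{2r-1}(-1)^iN^{(i-\lambda_1)}(\lambda)$. Thus $A-B$ is the difference of alternating sums of the single sequence $i\mapsto(-1)^iN^{(i-\lambda_1)}(\lambda)$ over the two windows $[r+1,2r-1]$ and $[1,r-1]$, reflections of one another across the omitted point $i=r$. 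The second step records, from \textbf{Case 3}, that with $a=\lambda_1+m_2$ and $b=\lambda_1+m_1+m_2+1$ the value $N^{(i-\lambda_1)}(\lambda)$ is supported on $[a,b]$, equal to $1$ at the endpoints and $2$ on the interior $[a+1,b-1]$. The two facts that drive everything are the telescoping identity $\sum_{i=\alpha}^{\beta}(-1)^i=\tfrac12\bigl((-1)^{\alpha}+(-1)^{\beta}\bigr)$ and the cancellations $(-1)^a+(-1)^{a+1}=0$ and $(-1)^{b-1}+(-1)^b=0$: when the windowed alternating sums are computed, every contribution from a genuine support endpoint of $N$ is annihilated, leaving only the window-boundary contributions at $r\pm1$. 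A preliminary check shows $a\ge\lambda_2+m_2\ge 2$ and, using $\lambda_1\ge\lambda_2\ge 2$, that $r+1\le b\le 2r-1$, so the support always lies inside $[1,2r-1]$ and always protrudes to the right of the cut $r$.

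The third step is the short trichotomy on the position of $a$ relative to $r$, equivalently of $\lambda_1$ relative to $\lambda_2+m_1$, since $a-r=(\lambda_1-\lambda_2-m_1)/2$. If $a>r$, the support sits entirely in $[r+1,2r-1]$ and its full alternating sum telescopes to $0$, so $A-B=0$. If $a<r$, the support straddles $r$ and the surviving window-boundary terms of the upper and lower windows are both $(-1)^{r+1}$, hence cancel in $A-B$. If $a=r$, the lower window contributes nothing while the upper window contributes $(-1)^{r+1}$, so $A-B=(-1)^{r+1}$ and $g_{(r^2)\lambda}=(-1)^{r+1}(-1)^{r+1}=1$. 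Finally, because $|\lambda|=2r$ forces $\lambda_1-\lambda_2-m_1$ to be even, the only integer value of $\lambda_1$ inside the stated window $\lambda_2+m_1-1\le\lambda_1\le\lambda_2+m_1+1$ compatible with a partition of $n$ is $\lambda_1=\lambda_2+m_1$, i.e. $a=r$; this reconciles the trichotomy with the announced dichotomy $g_{(r^2)\lambda}\in\{0,1\}$.

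The main obstacle will be the bookkeeping at the degenerate boundaries of these ranges: when $m_1=0$ (empty trapezoid interior), when $b=r+1$ or $a=r-1$ (single-point windowed support), or when $b=2r-1$ or $a=2$ (support flush against the outer boundary). Each forces the telescoping identity to be invoked on an empty or one-element range, and one must verify in every such instance that the cancellations $(-1)^a+(-1)^{a+1}=0$ and $(-1)^{b-1}+(-1)^b=0$ are applied legitimately. Once the empty-sum convention is fixed, the same three lines of computation should cover all these edge cases uniformly, so the difficulty is entirely in carefully tabulating which endpoints land in which window rather than in any new idea.
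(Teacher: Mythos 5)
Your proposal is correct and follows exactly the route the paper intends: extract the coefficient of $s_\lambda$ from the displayed expansion of $P_{(r^2)}(x;-1)$, kill the hook-Schur and $i=0$ terms via \eqref{e:b(n)}, apply the two-row formula \eqref{e:tworow} termwise, and then settle the closed form by the case analysis of $N^{(s)}$ from \textbf{Case 3} (the paper leaves this last step as ``by considering $\lambda$ case by case,'' and your parity/telescoping argument, including the observation that $a-r=(\lambda_1-\lambda_2-m_1)/2$ reduces the stated window to $\lambda_1=\lambda_2+m_1$, correctly fills it in).
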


\vskip 0.5 in
{\bf Tables for $K^-_{\xi\mu}(t)$, $2\leq n\leq 6$}\\
Here $[n]=t^{n-1}+\cdots+t+1,$ $[n]!!=[n][n-2]\cdots$.
For completeness, we include $n=2, 3, 4$ from \cite{W}.

\begin{table}[H]

\caption{\label{tab:2}n=2}

 \begin{tabular}{|c|c|c|}

  \hline

 \tabincell{c}{$\mu\backslash \xi$ } & $(2)$  \\

  \hline

$(2)$ & \tabincell{c}{$2$}  \\

\hline

$(1^2)$ & \tabincell{c}{$2[2]$} \\

\hline

 \end{tabular}

\end{table}

\begin{table}[H]

 \centering

\caption{\label{tab:3}n=3}

 \begin{tabular}{|c|c|c|c|}

  \hline

 \tabincell{c}{$\mu\backslash \xi$ } & $(3)$ & $(2,1)$ \\

  \hline

$(3)$ & \tabincell{c}{$2$} & \tabincell{c}{$0$}  \\

\hline

$(2,1)$ & \tabincell{c}{$2[2]$} & \tabincell{c}{$4$}  \\

\hline
$(1^3)$   & \tabincell{c}{$2[4]$} 
& \tabincell{c}{$4t[2]$}      \\

  \hline

 \end{tabular}

\end{table}

\begin{table}[H]

 \centering

\caption{\label{tab:4}n=4}

 \begin{tabular}{|c|c|c|c|}

  \hline

 \tabincell{c}{$\mu\backslash \xi$ } & $(4)$ & $(3,1)$  \\

  \hline

$(4)$ & \tabincell{c}{$2$} & \tabincell{c}{$0$}  \\

\hline

$(3,1)$ & \tabincell{c}{$2[2]$} & \tabincell{c}{$4$}   \\

\hline

$(2^2)$ & \tabincell{c}{$2t[2]$}  & \tabincell{c}{$4[2]$} \\

  \hline
$(2,1^2)$  & \tabincell{c}{$2[4]$}    & \tabincell{c}{$4[2]^2$}  \\

  \hline
$(1^4)$   & \tabincell{c}{$2[6]!!/[3]!$}    & \tabincell{c}{$4t[4]!!$}    \\
  \hline

 \end{tabular}

\end{table}

\begin{table}[H]

 \centering

\caption{\label{tab:5}n=5}

 \begin{tabular}{|c|c|c|c|c|}

  \hline

 \tabincell{c}{$\mu\backslash \xi$ } & $(5)$ & $(4,1)$ & $(3,2)$  \\

  \hline

$(5)$ & \tabincell{c}{$2$} & \tabincell{c}{$0$} & \tabincell{c}{$0$} \\

\hline

$(4,1)$ & \tabincell{c}{$2[2]$} & \tabincell{c}{$4$} & \tabincell{c}{$0$}  \\

\hline

$(3,2)$ & \tabincell{c}{$2t[2]$}  & \tabincell{c}{$4[2]$}  & \tabincell{c}{$4$}  \\

\hline
$(3,1^2)$ & \tabincell{c}{$2[4]$}  & \tabincell{c}{$4[2]^2$}  & \tabincell{c}{$4[2]$}  \\

  \hline
$(2^2,1)$  & \tabincell{c}{$2t[4]$}    & \tabincell{c}{$4[2][3]$}    & \tabincell{c}{$4[2]^2$}   \\

  \hline
$(2,1^3)$  & \tabincell{c}{$2[6]!!/[3]!$}    & \tabincell{c}{$4[4][3]$}    & \tabincell{c}{$4t[2]([3]+1)$}    \\

  \hline
$(1^5)$   & \tabincell{c}{$2[8]!!/[4]!$}    & \tabincell{c}{$4t[6]!!/[2]$}    & \tabincell{c}{$4t^2[4]^2$}     \\
  \hline

 \end{tabular}

\end{table}

\begin{table}[H]

 \centering

\caption{\label{tab:6}n=6}

 \begin{tabular}{|c|c|c|c|c|c|}

  \hline

 \tabincell{c}{$\mu\backslash \xi$ } & $(6)$ & $(5,1)$ & $(4,2)$ & $(3,2,1)$ \\

  \hline

$(6)$ & \tabincell{c}{$2$} & \tabincell{c}{$0$} & \tabincell{c}{$0$} & \tabincell{c}{$0$} \\

\hline

$(5,1)$ & \tabincell{c}{$2[2]$} & \tabincell{c}{$4$} & \tabincell{c}{$0$} & \tabincell{c}{$0$}  \\

\hline

$(4,2)$ & \tabincell{c}{$2t[2]$}  & \tabincell{c}{$4[2]$}  & \tabincell{c}{$4$}  & \tabincell{c}{$0$}   \\

\hline
$(4,1^2)$  & \tabincell{c}{$2[4]$}    & \tabincell{c}{$4[2]^2$}    & \tabincell{c}{$4[2]$}    & \tabincell{c}{$0$}   \\

  \hline
$(3,3)$ & \tabincell{c}{$2t^2[2]$}  & \tabincell{c}{$4t[2]$}  & \tabincell{c}{$4[2]$}  & \tabincell{c}{$0$}  \\

  \hline
$(3,2,1)$  & \tabincell{c}{$2t[4]$}    & \tabincell{c}{$4[2][3]$}    & \tabincell{c}{$4[2](t+2)$}    & \tabincell{c}{$8$} \\

  \hline
$(3,1^3)$   & \tabincell{c}{$2[6]!!/[3]!$}    & \tabincell{c}{$4[4][3]$}    & \tabincell{c}{$4[2]^2[3]$}    & \tabincell{c}{$8t[2]$}   \\

\hline
$(2^3)$   & \tabincell{c}{$2t^3[4]$}    & \tabincell{c}{$4t[4]!!$}    & \tabincell{c}{$4[2]([4]+t^2)$}    & \tabincell{c}{$8t[2]$}   \\

  \hline
$(2^2,1^2)$   & \tabincell{c}{$2t[6]!!/[3]!$}    & \tabincell{c}{$4[4]^2$}    & \tabincell{c}{$4[2]^2([4]+t)$}    & \tabincell{c}{$8t[2]^2$}  \\

  \hline
$(2,1^4)$   & \tabincell{c}{$2[8]!!/[4]!$}    & \tabincell{c}{$4[4][6]!!$}    & \tabincell{c}{$4t[4]!!([4]+1)$}    & \tabincell{c}{$8t^2[4]!!$}   \\

  \hline
$(1^6)$   & \tabincell{c}{$2[10]!!/[5]!$}    & \tabincell{c}{$4t[8]!!/[3]!$}    & \tabincell{c}{$4t^2[5][6]!!/[3]$}    & \tabincell{c}{$8t^4[6]!!/[3]$}   \\
  \hline
 \end{tabular}

\end{table}

\vskip30pt \centerline{\bf Acknowledgments}
We thank Jinkui Wan and Weiqiang Wang for helpful discussions on the subject.
The project is partially supported by
the Simons Foundation under grant no. 523868 and NSFC grant 12171303.
\bigskip

\bibliographystyle{plain}

\end{document}